\newcommand\vt[1]{ \mathbf{#1} }
\newcommand{\maC}{\mathcal C}
\newcommand{\maD}{\mathcal D}
\newcommand{\maE}{\mathcal E}
\newcommand{\maK}{\mathcal K}
\newcommand{\maS}{\mathcal S}
\newcommand{\ind}{\operatorname{ind}}
\newcommand{\CC}{\mathbb C}
\newcommand{\PP}{\mathbb P}
\newcommand{\RR}{\mathbb R}
\newcommand{\TT}{\mathbb T}
\newcommand{\ZZ}{\mathbb Z}
\newcommand\Hk{H_{\vt k}}
\newcommand\TmS{\TT \smallsetminus \maS}
\newcommand\RmS{\overline{(\RR^3 \smallsetminus \maS)}_{rad}}
\newtheorem{theorem}{Theorem}[section]
\newtheorem{proposition}[theorem]{Proposition}
\newtheorem{corollary}[theorem]{Corollary}
\newtheorem{lemma}[theorem]{Lemma}
\theoremstyle{definition}
\theoremstyle{remark}
\author[E. Hunsicker]{Eugenie Hunsicker} \address{Eugenie Hunsicker,
Department of Mathematical Sciences, Loughborough University, Loughborough,
Leicestershire, LE11 3TU, UK } \email{E.Hunsicker@lboro.ac.uk}
\author[H. Li]{Hengguang Li} \address{Hengguang Li, Department
of Mathematics, Wayne State University,  Detroit, MI 48202, USA} \email{hli@math.wayne.edu}
\author[V. Nistor]{Victor Nistor} \address{V. Nistor, Pennsylvania
  State University, Math. Dept., University Park, PA 16802, USA, and
  Inst. Math. Romanian Acad.  PO BOX 1-764, 014700 Bucharest Romania}
\email{nistor@math.psu.edu}
\author[V. Uski]{Ville Uski} \address{Ville Uski,
Department of Mathematical Sciences, Loughborough University, Loughborough,
Leicestershire, LE11 3TU, UK } \email{V.Uski@lboro.ac.uk}
\date\today \thanks{V.N. was partially supported by the NSF Grants
  OCI-0749202 and DMS-1016556. Manuscripts available from {\bf
    http:{\scriptsize//}www.math.psu.edu{\scriptsize/}nistor{\scriptsize/}}.
  H.L. was partially supported by the NSF Grant DMS-1115714.  E.H. was
  supported in part by Leverhulme Trust Project Assistance Grant F/00
  261/Z.}
\begin{document}

\title[Schr\"odinger operators]{Analysis of Schr\"odinger operators
  with inverse square potentials I: regularity results in 3D}

\date{\today}


\begin{abstract}
Let $V$ be a potential on $\RR^3$ that is smooth everywhere except at
a discrete set $\maS$ of points, where it has singularities of the
form $Z/\rho^2$, with $\rho(x) = |x - p|$ for $x$ close to $p$ and $Z$
continuous on $\RR^3$ with $Z(p) > -1/4$ for $p \in \maS$. Also assume
that $\rho$ and $Z$ are smooth outside $\maS$ and $Z$ is smooth in
polar coordinates around each singular point. We either assume that
$V$ is periodic or that the set $\maS$ is finite and $V$ extends to a
smooth function on the radial compactification of $\RR^3$ that is
bounded outside a compact set containing $\maS$. In the periodic case,
we let $\Lambda$ be the periodicity lattice and define $\TT := \RR^3/
\Lambda$. We obtain regularity results in weighted Sobolev space for
the eigenfunctions of the Schr\"odinger-type operator $H = -\Delta +
V$ acting on $L^2(\TT)$, as well as for the induced $\vt
k$--Hamiltonians $\Hk$ obtained by restricting the action of $H$ to
Bloch waves. Under some additional assumptions, we extend these
regularity and solvability results to the non-periodic case. We sketch
some applications to approximation of eigenfunctions and eigenvalues
that will be studied in more detail in a second paper.
\end{abstract}

\maketitle

\tableofcontents

\section{Introduction and statement of main results}

We study in this paper regularity and decay properties of the
eigenfunctions of Schr\"{o}dinger type operators with inverse-square
singularities. We either assume that the potential is periodic or that
it has a nice behavior at infinity and only finitely many
singularities. In order to explain our assumptions and results in more
detail, we organize our Introduction in subsections, concentrating on
the case of periodic potentials, the non-periodic case being similar,
but simpler. We first introduce the operators $\Hk$ obtained from the
Hamiltonian $-\Delta + V$ acting on Bloch waves. In the second
subsection of the Introduction, we explain our assumptions on the
potential $V$. Finally, we state our main results and we summarize the
contents of the paper.

This paper is written to put on a solid foundations the numerical
methods developed in \cite{HLNU2} and \cite{HLNU4}. We have thus
written this paper with an eye to the numerical analyst. More
theoretical results on Hamiltonians with inverse square potentials in
arbitrary dimensions will be included in the third part of this paper.

We have to mention Kato's ground breaking papers \cite{Kato51}, where
the self-adjointness of Schr\"{o}dinger type Hamiltonians was proved
and \cite{Kato57}, where boundedness properties of the eigenfunctions
and eigenvalues of these Hamiltonian operators was proved. Moreover,
see \cite{Bespalov, Westphal, Felli2, Felli1, MorozSchmidt, Zuazua,
  Sprung} for other papers studying Hamiltonians with inverse square
potentials, both from the point of view of physical and numerical
applications. See also \cite{Dauge, ferrero, Flad2, Flad3, Flad1,
  Fournais1, Hamaekers, Siedentop, Schwab, VasyReg, Yserentant3} for
some related results.

\subsection{The Hamiltonian $\Hk$}
Let $V$ be a periodic potential on $\RR^3$ with Bravais lattice (of
translational symmetries) $\Lambda \cong \mathbb{Z}^3$. Assume that
$V$ is smooth except at a set of points $\maS$, which is thus
necessarily also periodic with with respect to the lattice
$\Lambda$. We assume that there are only finitely many elements of
$\maS$ in any fundamental domain $\PP$ of $\Lambda$. Let $p \in \maS$
be a singular point and $\rho(x) = |x-p|$ for $x$ close to $p$ and
$\rho$ smooth outside $\maS$. We assume that around $p$ the potential
$V$ has a singularity of the form $Z/\rho^2$, where $Z$ is continuous
across $p$ and smooth in polar coordinates around $p$. We shall study
numerically Hamiltonian operators of the form
\begin{equation}\label{eq.def.H}
  H := -\Delta + V.
\end{equation}
Systems with such potentials have been studied as theoretical models
both from the viewpoint of classical mechanics and from the quantum
mechanical viewpoint.  In addition, they arise in a variety of
physical contexts, such as in relativistic quantum mechanics from the
square of the Dirac operator coupled with an interaction potential, or
from the interaction of a polar molecule with an electron
\cite{MorozSchmidt}.

A standard method for studying these operators is through their action
on Bloch waves.  For any $\vt k \in \RR^3$, the Bloch waves of $H$
with wave vector $\vt k$ are elements of $L^2_{loc}(\RR^3)$ that
satisfy the semi-periodicity condition that, for all $X \in \Lambda$,
\begin{equation}\label{eq.Block}
  \psi_{\vt k}(x+X) = e^{i{\vt k}\cdot X} \psi_{\vt k}(x).
\end{equation}
(It is enough to consider $\vt k$ in the first Brillouin zone $\PP_*$
of the reciprocal lattice to $\Lambda$. Also, the equality is that of
two $L^2_{loc}$ functions, and hence it holds only almost everywhere
in $x$.)  A Bloch wave with wavevector $\vt k$ can be written as
\begin{equation}\label{eq.Block2}
  \psi_{\vt k}(x) = e^{i{\vt k}\cdot x} u_{\vt k} (x)
\end{equation}
for a function $u_{\vt k}$ that is truly periodic with respect to
$\Lambda_t$ and thus can be considered as living on the three-torus
$\TT := \RR^3 /\Lambda \simeq (S^1)^3$ (obtained by identifying points
in $\RR^3$ that are equivalent under the action of the lattice
$\Lambda$ by translations). Note that the periodicity condition that a
Bloch wave satisfies prevents it from being in $L^2(\RR^3)$, thus a
nontrivial Bloch wave that satisfies the equation
\begin{equation}\label{eq.Bloch.e}
  H \psi_{\vt k} = \lambda \psi_{\vt k}
\end{equation}
is not, properly speaking, an eigenfunction of the Hamiltonian
operator $H$. Rather, it is a generalized eigenfunction, corresponding
to a value in the continuous spectrum of $H$. If $\psi_{\vt k}$ is a
Bloch wave that is a generalized eigenfunction of $H$ with generalized
eigenvalue $\lambda$, then the function $u_{\vt k} := e^{ -i{\vt
    k}\cdot x} \psi_{\vt k}(x)$ will then be an actual
$\lambda$-eigenfunction of the $\vt k$--Hamiltonian $\Hk$ on
$L^2(\TT)$ defined by
\begin{equation}\label{eq:hamk}
  \Hk := - \sum_{j=1}^3 (\partial_j + ik_j)^2 + V.
\end{equation}
Indeed, this follows from the equation
\begin{equation}\label{eq.H-Hk}
  H (e^{i{\vt k}\cdot x} u_{\vt k} (x)) = e^{i{\vt k}\cdot x} \Hk
  u_{\vt k} (x).
\end{equation}
Thus, it is useful to understand the regularity of eigenfunctions
$u_{\vt k}$ for the operators $\Hk$, as well as to arrive at
theoretical estimates for the accuracy of various schemes to estimate
them and their associated eigenvalues.

\subsection{Assumptions on the potential $V$}
In this paper, we extend and test the results of
\cite{HunsickerNistorSofo} to deal with the more singular potentials
that have inverse-square singularities. More precisely, we extend the
results of the aforementioned paper from potentials where $\rho V$ is
smooth in polar coordinates to potentials where $\rho^2V$ is smooth in
polar coordinates and continuous on $\TT$.  (Recall that $\rho$ is a
function that locally gives the distance to the singular point.) In
particular, we obtain regularity results in weighted Sobolev spaces
that will then permit us to derive estimates for the accuracy of two
approximation schemes that we design and which are studied in detail
in the forthcoming second and fourth parts of this paper.  The first
scheme is a finite element method with a mesh graded towards the
singular points as in \cite{HunsickerNistorSofo, BNZ3D2}.  The second
scheme is an augmented plane-wave method, similar to a ``muffin-tin''
method \cite{Martin}.

In order to state our results, we first need to set some notation and
introduce our assumptions on the potential $V$. Let $\maS \subset \TT
:= \RR^3/\Lambda$ be the finite set of points where $V$ has
singularities. By abuse of notation, we shall denote by $|x-y|$ the
induced distance between two points $x, y \in \TT$. Let then $\rho :
\TT \to [0, 1]$ be a nonnegative continuous function smooth outside
$\maS$ such that
\begin{equation}\label{eq.rho}
  \rho(x) = |x - p|\ \text{ for } x \text{ close to } p \in \maS,
\end{equation}
as before, and further assume also that $\rho(x) = 1$ for $x$ far from
$\maS$.

Our first assumption on $V$ is that $\rho^2V$ be smooth in polar
coordinates up to $\rho=0$ near each singularity. Let us explain this
in more detail. We first replace each singular point $p \in \maS$ with
a 2-sphere in a smooth way, thus obtaining a space denoted
$\overline{\TmS}$. This is the usual procedure of blowing up the
singularities. We think of stretching out the holes where the
singularities of $V$ are and compactifying the result using boundary
spheres.  It would be possible to carry out analysis similar to the
calculations in this paper with only the assumption that $\rho^2 V$ be
smooth on $\overline{\TmS}$. To simplify some calculations and to
obtain closed form results, however, we will further require the
resulting function $Z$ to be constant on the blow up of each point in
$\maS$, which can be reformulated as saying that $\rho^2 V$ is also
{\em continuous} on $\TT$. Our first assumption on the potential $V$
is therefore
\begin{equation}\label{eq.def.Z}
  \text{\bf Assumption 1}: \qquad\ Z := \rho^2 V \in
  \maC^\infty(\overline{\TmS}) \cap \maC(\TT).
\end{equation}
Assumption 1, more precisely the continuity of $Z$ at $\maS$, allows
us to formulate our second assumption. Namely,
\begin{equation}\label{eq.Z1/4}
  \text{\bf Assumption 2}: \qquad\
  \eta_0 := \min_{p \in \maS} Z(p) > -1/4 .
\end{equation}
Therefore, the constant
\begin{equation}\label{eq.eta}
  \eta := \sqrt{1/4 + \eta_0},
\end{equation}
which will play an important role in this paper, is a {\em positive
  real number}. This constant will appear in many results below. We
will use Assumptions 1 and 2 throughout the paper, except in Section
\ref{sec1}, where we prove more general forms of our results, not
requiring Assumption 2.

\subsection{Regularity and approximation results}
The domains of all the Hamiltonian operators considered in this paper
will be contained in weighted Sobolev spaces on $\TT \smallsetminus
\maS$. We define these spaces by:
\begin{equation}\label{eq.def.ws}
  \maK^m_a(\TmS ) := \{v : \TT \smallsetminus
  \maS \to \CC, \ \rho^{|\beta|-a} \partial^\beta v \in L^2(\TT),
  \ \forall\ |\beta| \leq m\}.
\end{equation}
These spaces have been considered in many other papers, most notably
in Kondratiev's groundbreaking paper \cite{kondratiev67}. They can be
identified with the b-Sobolev spaces of \cite{meaps} (associated to a
manifold with boundary), but with a different indexing and
notation. These spaces were generalized in \cite{AIN} to more general
manifolds with corners with additional structure (Lie manifolds).

To formulate the stronger regularity for eigenvalues, we shall need
the following notation. For each point $p \in \maS$, let
\begin{equation}\label{eq.nu_0}
\nu_0(p) = \left\{
\begin{array}{ll}
2 & Z(p) \geq \frac{3}{4}\\
1 + \sqrt{1/4 + Z(p)} & Z(p) \in
(-\frac{1}{4},\frac{3}{4})\\
1 & Z(p) \leq -\frac{1}{4}\,,
 \end{array}
 \right.
\end{equation}
 and
\begin{equation}\label{eq.nu0}
  \nu_0 = \min_{p \in  \maS} \nu_0(p).
\end{equation}
For each point $p \in \maS$ for which $Z(p) \in (-1/4,3/4]$, define a
smooth cutoff function $\chi_p$ that is equal to $1$ in a small
neighborhood of $p$ and is zero outside another small neighborhood of
$p$, so that all the functions $\chi_p$ have disjoint supports. Define
the space $W_{s}$ to be the complex linear span:
\begin{equation}\label{eq.def.Ws}
  	W_{s} = \sum_{Z(p) \in (-1/4,3/4]} \CC \chi_p \rho^{\sqrt {1/4
              + Z(p)} - 1/2}.
\end{equation}
Using also the notation introduced in the previous subsection, we then
have the following result, whose proof follows from the proof of
Theorem \ref{theorem1.gen} below.

\begin{theorem}\label{theorem1}
Consider a potential $V$ satisfying Assumptions 1 and 2. Then the
Hamiltonian operator $H_{\bf k}$ acting as an unbounded operator on
$L^2(\TT)$ has a distinguished self-adjoint extension with domain
\begin{equation*}
  \maD(H_{\bf k}) = \maK_{2}^{2}(\TmS) + W_{s} \subset
  \maK^2_{\nu}(\TmS), \quad \nu< \nu_0 = \min_{p \in \maS} \nu_0(p)
  \in (0,2].
\end{equation*}
In particular, if $\eta_0 :=\eta_0 := \min_{p \in \maS} Z(p) \geq
3/4$, then $\Hk$ is in fact essentially self-adjoint and, if $\eta_0 >
3/4$, then $\maD(H_{\bf k}) = \maK_2^2(\TmS)$.
\end{theorem}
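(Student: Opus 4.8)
The plan is to reduce the global statement on $\TT$ to a model computation near each singular point $p\in\maS$, since away from $\maS$ the operator $\Hk$ is a smooth elliptic operator of second order and its regularity theory is classical (elements of the domain are locally $H^2$, and $\maK^2_a$ coincides with $H^2$ away from $\maS$ for any $a$). Near a fixed $p\in\maS$ we use the blow-up coordinates: writing $\rho$ for the radial variable and passing to the cylinder $(0,\varepsilon)\times\Sz^2$ via $t=-\log\rho$, the operator $\rho^2\Hk$ becomes, up to lower-order (in the Mellin/b-calculus sense) perturbations coming from the smooth remainder of $V$, the terms $ik_j$, and the deviation of $Z$ from the constant $Z(p)$, the model operator $-(\rho\pa_\rho)^2 + \rho\pa_\rho + \Delta_{\Sz^2} + Z(p)$. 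This is a b-operator (totally characteristic) whose indicial family is the quadratic pencil $P(\tau)=\tau^2 + i\tau + \Delta_{\Sz^2}+Z(p)$ acting on $\Sz^2$; I would compute its indicial roots explicitly. On the $\ell$-th spherical harmonic eigenspace (eigenvalue $\ell(\ell+1)$ of $-\Delta_{\Sz^2}$) the roots of $\tau^2+i\tau = \ell(\ell+1)-Z(p)$ are $\tau=\pm i\big(\tfrac12 + \sqrt{1/4+Z(p)+\ell(\ell+1)}\big)$, i.e. the "decay exponents" $s_\ell^\pm = -\tfrac12 \pm \sqrt{1/4+Z(p)+\ell(\ell+1)}$ for the behavior $\rho^{s}$ near $p$. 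The key arithmetic observations are: (i) the smallest relevant exponent is $s_0^- = -\tfrac12 - \eta(p)$ with $\eta(p)=\sqrt{1/4+Z(p)}>0$ by Assumption 2, so both indicial roots on the $\ell=0$ space are genuinely complex, which forces a unique self-adjoint realization (no $\log$ terms, and the deficiency spaces are governed by which power-type solutions $\rho^{s_0^\pm}$ lie in $L^2$ with the operator applied); and (ii) the threshold $Z(p)=3/4$ is exactly where $s_0^+ = -\tfrac12+\sqrt{1/4+Z(p)}$ crosses the value $1$, i.e. where $\chi_p\rho^{s_0^+}$ transitions between lying in $\maK^2_2$ and not.

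With the indicial roots in hand, I would invoke the standard Kondratiev–Melrose regularity theory for b-elliptic operators (as cited in the excerpt via \cite{kondratiev67, meaps}, and in the more general Lie-manifold form \cite{AIN}): if $u\in L^2$ and $\Hk u\in L^2$ near $p$, then $u$ admits a partial asymptotic expansion in powers $\rho^{s}$ with $s$ ranging over the indicial roots lying in the critical strip between the weights $a=?$ and $a=2$, plus a remainder in $\maK^2_2$. Concretely, $L^2(\TT)$ near $p$ sits inside $\maK^0_{-3/2+\ep}$-type spaces; the only indicial roots $s$ with $-1/2 < s+... $ hmm—more carefully, the only roots $s$ producing a solution $\rho^{s}$ that is in $L^2$ near $p$ but whose contribution is \emph{not} already in $\maK^2_2$ are the $s_0^+ = \sqrt{1/4+Z(p)}-1/2$ for those $p$ with $Z(p)\in(-1/4,3/4]$ (for $Z(p)>3/4$ one has $s_0^+>1$ hence $\rho^{s_0^+}\in\maK^2_2$, and for $Z(p)\le -1/4$, excluded by Assumption 2, it would fail $L^2$). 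This is exactly the span $W_s$ defined in \eqref{eq.def.Ws}. Therefore any $u$ in the domain decomposes as $u = u_2 + \sum_p c_p\chi_p\rho^{\sqrt{1/4+Z(p)}-1/2}$ with $u_2\in\maK^2_2(\TmS)$, giving $\maD(\Hk)=\maK^2_2(\TmS)+W_s$; the reverse inclusion (that every such sum lies in the domain) is a direct check that $\Hk$ maps $W_s$ into $L^2$ near $p$, using $\rho^2 V \cdot \rho^{s_0^+-2} = Z\,\rho^{s_0^+-2}$ and that the leading singular terms cancel by the indicial equation, leaving an $L^2$ remainder. The inclusion $\maD(\Hk)\subset\maK^2_\nu(\TmS)$ for $\nu<\nu_0$ follows since $\chi_p\rho^{s_0^+}\in\maK^2_\nu$ precisely when $\nu - s_0^+ < \dots$, i.e. $\nu < s_0^+ + 1 = \nu_0(p)$ when $Z(p)\in(-1/4,3/4]$, while $\maK^2_2\subset\maK^2_\nu$ for $\nu<2$, and $\nu_0(p)=2$ when $Z(p)\ge 3/4$; taking the min over $p$ gives $\nu<\nu_0$.

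For the self-adjointness claims: the abstract mechanism is that $\Hk$ with core $\CIc(\TmS)$ is symmetric and semibounded (the latter because $Z(p)>-1/4$ makes the relevant Hardy-type inequality strict — this is where $\eta>0$ enters), and its self-adjoint extensions are parametrized by Lagrangian subspaces of the finite-dimensional quotient $\maD_{\max}/\maD_{\min}$. This quotient is built from the pairs of singular solutions $\rho^{s_0^\pm}$ at each $p$; the \emph{distinguished} (Friedrichs) extension is the one picking the "larger" exponent $s_0^+$ at each $p$, which is exactly $\maK^2_2 + W_s$. When $Z(p)\ge 3/4$ for all $p$, one has $s_0^-\le -2$ so $\rho^{s_0^-}\notin L^2$ and $s_0^+\ge 1$ so $\rho^{s_0^+}\in\maK^2_2$, meaning $\maD_{\max}/\maD_{\min}=0$ at every $p$ and the operator is essentially self-adjoint; if moreover $Z(p)>3/4$ strictly then $s_0^+>1$ and $W_s=0$, so $\maD(\Hk)=\maK^2_2(\TmS)$ outright. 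I expect the main obstacle to be bookkeeping rather than conceptual: one must track carefully how the perturbations — the first-order terms $2ik_j\pa_j$, the zeroth-order $|\vt k|^2$, the smooth part of $V$, and the non-constant part $Z(x)-Z(p)$ of the singular coefficient (which is $O(\rho)$ in the conormal sense by Assumption 1, since $Z$ is smooth on the blow-up and constant on the front face) — contribute only to the non-indicial part of the operator and hence do not alter the indicial roots, so that the model computation on $\Sz^2$ genuinely controls the domain. This is precisely the point where Assumption 1 (smoothness of $Z$ in polar coordinates, continuity on $\TT$) is used, and it is the technical heart of the argument; the details are carried out in the proof of Theorem \ref{theorem1.gen}.
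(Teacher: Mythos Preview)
Your overall strategy is sound and shares the paper's b-calculus framework: localize near each $p$, compute the indicial roots of the model operator, and read off the domain from the singular functions. The technical core differs, however. The paper does \emph{not} invoke semiboundedness or the Friedrichs construction at this stage. Instead it conjugates to $A=\rho^{1/2}\Hk\rho^{-1/2}$ on $\rho^{-1}L^2_b$, identifies the critical strip as $(-1,1)$, and applies the Gil--Mendoza theory directly: the quotient $\maD_{\max}/\maD_{\min}$ is spanned by the explicit set $\{w\rho^{\pm\tilde\beta_l}\psi^m_l\}$ of \eqref{eq:defset}, and self-adjoint extensions correspond to maximal isotropic subspaces for the pairing $[u,v]_A$ of \eqref{Apairing}, which is computed case by case via Mellin transform and residue integrals. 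For real $\tilde\beta_l>0$ this yields $[u,v]_A=k(u_+\bar v_- - u_-\bar v_+)$, and the choice $u_-=0$ singles out the extension with domain $\maK^2_2+W_s$. Your route (Hardy $\Rightarrow$ semibounded $\Rightarrow$ Friedrichs $\Rightarrow$ pick the larger exponent) is legitimate, and the paper even notes at the end of Section~\ref{sec2} that it furnishes an alternative proof; but it needs Assumption~2 from the outset, whereas the paper's Theorem~\ref{theorem1.gen} does not and also treats the cases $\tilde\beta_l=0$ and $\tilde\beta_l\in i\RR$.

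A few points to correct. First, under Assumption~2 the $\ell=0$ roots $s_0^\pm=-\tfrac12\pm\eta(p)$ are \emph{real}, not ``genuinely complex,'' and for $Z(p)\in(-1/4,3/4)$ both lie in the critical strip, so there are \emph{several} self-adjoint extensions---your claim that this ``forces a unique self-adjoint realization'' is wrong and contradicts your own (correct) later paragraph about Lagrangian subspaces of $\maD_{\max}/\maD_{\min}$. Second, the threshold $Z(p)=3/4$ is where $s_0^+$ crosses $1/2$, not $1$: in three dimensions $\rho^s\in\maK^2_2$ iff $s>1/2$. Third, at $Z(p)=3/4$ one has $s_0^-=-3/2$ (the $L^2$ borderline), not $s_0^-\le -2$.
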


The importance of the above theorem is the following corollary, which
says that under Assumptions 1 and 2, the Hamiltonian operators $\Hk =
-\sum_{j=1}^3 (\partial_j +ik_j)^2 +V$ acting on $L^2(\TT)$ can be
completely understood through their eigenfunctions and eigenvalues.

\begin{corollary} \label{corollary2}
Under the assumptions of Theorem \ref{theorem1}, there exists a
complete orthonormal basis of $L^2(\TT)$ consisting of eigenfunctions
of $\Hk$.
\end{corollary}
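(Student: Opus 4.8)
The plan is to obtain the corollary from the spectral theorem for self-adjoint operators with compact resolvent. By Theorem \ref{theorem1}, $\Hk$ is self-adjoint on $L^2(\TT)$ with domain $\maD(\Hk) = \maK_2^2(\TmS) + W_{s}$, and this domain is contained in $\maK^2_{\nu}(\TmS)$ for every $\nu < \nu_0 \in (0,2]$. Fix once and for all some $\nu$ with $0 < \nu < \nu_0$. The proof of Theorem \ref{theorem1} yields, besides the identification of $\maD(\Hk)$, the corresponding a priori (elliptic regularity) estimate
\begin{equation*}
  \|u\|_{\maK^2_{\nu}(\TmS)} \;\le\; C \big( \|\Hk u\|_{L^2(\TT)} + \|u\|_{L^2(\TT)} \big), \qquad u \in \maD(\Hk),
\end{equation*}
so that $\maD(\Hk)$, equipped with the graph norm, embeds continuously into $\maK^2_{\nu}(\TmS)$.

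The one substantive point is that the inclusion $\maK^2_{\nu}(\TmS) \hookrightarrow \maK^0_0(\TmS) = L^2(\TT)$ is \emph{compact}. Away from $\maS$ this is just the Rellich--Kondrachov theorem on the compact manifold $\TT$. Near each singular point $p \in \maS$ the positive weight exponent $\nu > 0$ provides the decay needed to control the behaviour near $p$ of a sequence bounded in $\maK^2_{\nu}(\TmS)$: this is the standard compactness property of the Kondratiev (equivalently, b-Sobolev) spaces on a cone, namely that $\maK^m_a \hookrightarrow \maK^{m'}_{a'}$ is compact whenever $m > m'$ and $a > a'$ (see, e.g., \cite{kondratiev67, meaps, AIN}); here we use $m = 2 > 0 = m'$ and $a = \nu > 0 = a'$. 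A routine partition of unity argument patches the interior estimate together with the estimates near the finitely many points of $\maS$.

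Granting this, the corollary follows quickly. Since $\Hk$ is self-adjoint, $i$ lies in its resolvent set and $(\Hk - i)^{-1}$ maps $L^2(\TT)$ boundedly into $\maD(\Hk)$ with the graph norm (indeed $\Hk (\Hk - i)^{-1} = I + i(\Hk - i)^{-1}$ is bounded on $L^2(\TT)$). Composing with the continuous inclusion $\maD(\Hk) \hookrightarrow \maK^2_{\nu}(\TmS)$ and then with the compact inclusion $\maK^2_{\nu}(\TmS) \hookrightarrow L^2(\TT)$ shows that $(\Hk - i)^{-1}$ is a compact operator on $L^2(\TT)$; that is, $\Hk$ has compact resolvent. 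The spectral theorem for self-adjoint operators with compact resolvent then produces an orthonormal basis of $L^2(\TT)$ consisting of eigenfunctions of $\Hk$, the eigenvalues having finite multiplicities and (since $\Hk$ is bounded below, by Assumption 2 and the Hardy inequality, or directly from the description of $\maD(\Hk)$) accumulating only at $+\infty$. The main obstacle is precisely the compactness of $\maK^2_{\nu}(\TmS) \hookrightarrow L^2(\TT)$ near the singular set; everything else is soft functional analysis.
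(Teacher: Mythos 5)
Your proposal is correct and follows essentially the same route as the paper: self-adjointness from Theorem \ref{theorem1}, the inclusion $\maD(\Hk) \subset \maK^2_{\nu}(\TmS)$ together with the compact (b-Rellich) embedding $\maK^2_{\nu}(\TmS) \Subset L^2(\TT)$ for $\nu>0$, hence compact resolvent, hence the spectral theorem. You merely spell out a few steps the paper leaves implicit (the graph-norm bound on the resolvent and the partition-of-unity reduction of the compactness claim).
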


We can now state a regularity theorem for the eigenfunctions of $\Hk$
near a point $p \in \maS$, or equivalently, for Bloch waves associated
to the wavevector $\vt k$. Recall the functions $\chi_p$ supported
near points of $\maS$ and used to define the spaces $W_{s}$.

\begin{theorem}\label{theorem2}
Assume that $V$ satisfies Assumptions 1 and 2.  Let $\Hk u = \lambda
u$, where $u \in \maD(\Hk)$, $u \neq 0$. Then, for any $m \in \ZZ_+$,
\begin{equation*}
    u \in \maK^{m+1}_{a + 1}(\TmS), \quad \forall a < \eta := \min_{p
      \in \maS} \sqrt{1/4 + Z(p)}.
\end{equation*}
Moreover, we can find constants $a_p \in \RR$ such that
\begin{equation*}
    u - \sum_{p \in \maS} \chi_p \rho^{\sqrt{1/4 + Z(p)} - 1/2} \in
    \maK^{m+1}_{a' +1}(\TmS ), \quad \forall a' < \min_{p \in \maS}
    \sqrt{9/4 + Z(p)}\,.
\end{equation*}

\end{theorem}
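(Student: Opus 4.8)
The plan is to localize near each singular point $p \in \maS$, where the problem becomes a model problem on a cone (equivalently, on $[0,\epsilon) \times S^2$ with the metric $d\rho^2 + \rho^2 g_{S^2}$), and then invoke the standard regularity theory for the Laplacian on cones with an inverse-square potential — this is the Kondratiev/Mellin machinery underlying the weighted spaces $\maK^m_a$. First I would reduce to a single singular point: choose the cutoffs $\chi_p$ with slightly enlarged supports, write $u = \sum_p \chi_p u + (1 - \sum_p \chi_p) u$, observe that the last term is supported away from $\maS$ where $\Hk$ is a smooth elliptic operator, so it lies in $\maC^\infty$ and hence in every $\maK^{m+1}_{a+1}$; thus it suffices to prove the two claims for each $\chi_p u$. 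Near $p$, conjugating by $e^{i\vt k\cdot x}$ we may replace $\Hk$ by $-\Delta + Z/\rho^2 + (\text{lower order})$, and since $Z$ is smooth in polar coordinates and continuous at $p$ with value $Z(p)$, we write $Z/\rho^2 = Z(p)/\rho^2 + (Z - Z(p))/\rho^2$, where the remainder is a bounded-times-$\rho^{-2}$ term that is, however, order $\rho^{-1}$ better in the sense of the scaling — it maps $\maK^m_{a}$ to $\maK^{m-2}_{a-1}$ rather than $\maK^{m-2}_{a-2}$. So the genuine model operator is $L_p := -\Delta + Z(p)/\rho^2$.

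The core of the argument is then the Mellin analysis of $L_p$. In polar coordinates $-\Delta = -\partial_\rho^2 - \frac{2}{\rho}\partial_\rho + \frac{1}{\rho^2}\Delta_{S^2}$, so $\rho^2 L_p = -(\rho\partial_\rho)^2 - (\rho\partial_\rho) + \Delta_{S^2} + Z(p)$. Its indicial family (Mellin symbol) is the operator $\Delta_{S^2} + Z(p) + \zeta(\zeta+1)$ acting on functions on $S^2$; this is non-invertible precisely when $-\zeta(\zeta+1) - Z(p)$ is an eigenvalue $\ell(\ell+1)$ of $\Delta_{S^2}$, i.e. at the indicial roots
\begin{equation*}
  \zeta^\pm_\ell = -\tfrac12 \pm \sqrt{\tfrac14 + \ell(\ell+1) + Z(p)}, \qquad \ell = 0, 1, 2, \dots.
\end{equation*}
By Assumption 2, $\tfrac14 + Z(p) > 0$, so the smallest positive-side root is $\zeta_0^+ = -\tfrac12 + \sqrt{\tfrac14 + Z(p)}$, corresponding to $\ell = 0$, and the next one is $\zeta_1^+ = -\tfrac12 + \sqrt{9/4 + Z(p)}$ (the $\ell=1$ value, since $\sqrt{1/4 + 2 + Z(p)}$ is the $\ell=1$ root and $2 = \ell(\ell+1)$ — wait, $\ell=1$ gives $\ell(\ell+1)=2$, so $\zeta_1^+ = -\tfrac12 + \sqrt{9/4 + Z(p)}$; yes). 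The regularity statement is now the standard Kondratiev theorem: since $u \in \maD(\Hk) \subset \maK^2_\nu$ for $\nu < \nu_0(p)$ (Theorem \ref{theorem1}) and $\Hk u = \lambda u$ with $\lambda u \in \maK^2_\nu \subset \maK^0_{\nu-2}$, in fact $\lambda u$ is as smooth as $u$ in the $\maK$-scale, one bootstraps: elliptic regularity on the cone gives $u \in \maK^{m+1}_{a+1}$ for all $m$, for any weight $a+1$ not crossing an indicial root — i.e. $a + 1 < \zeta_0^+ + 1 = \tfrac12 + \sqrt{\tfrac14 + Z(p)}$... I need to match the paper's normalization. The paper wants $a < \eta$, i.e. $a+1 < 1 + \sqrt{1/4+Z(p)}$, which is $\nu_0(p)$ when $Z(p)$ is in the middle range; the top weight before hitting trouble is governed by the indicial root $\zeta_0^+$ shifted by the dimensional factor, giving $\zeta_0^+ + \tfrac{3}{2} = 1 + \sqrt{1/4 + Z(p)}$. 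Good — that is exactly the claimed threshold.

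For the second, sharper statement, the point is that $u$ fails to lie in $\maK^{m+1}_{a+1}$ for $a \ge \eta$ only because of the single indicial root $\zeta_0^+$ at $\ell = 0$: the partial-expansion theorem of Kondratiev type says that if $\Hk u = \lambda u$ with $u$ in a space below weight corresponding to $\zeta_0^+$, then $u$ equals a finite linear combination of the generalized-eigenfunction terms $\rho^{\zeta}(\log\rho)^j Y_\ell(\omega)$ attached to indicial roots in the gap, plus a remainder in the better space $\maK^{m+1}_{a'+1}$ with $a'$ up to the next root $\zeta_1^+$. Here the only root in the relevant strip is $\zeta_0^+ = \sqrt{1/4+Z(p)} - \tfrac12$ with $\ell = 0$, so $Y_0$ is constant and there is no logarithm (the root is simple because $\tfrac14 + Z(p) \neq $ any other $\tfrac14 + \ell(\ell+1) + Z(p)$ and, when $Z(p) < 3/4$, it is $< \zeta_1^+$; when $Z(p) \ge 3/4$ the first statement already gives $a < \eta \le$ everything and the correction term can be absorbed, consistent with $W_s$ only summing over $Z(p) \in (-1/4, 3/4]$ — actually one should keep the term for all $p$ with coefficient possibly $0$). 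Thus $u - \sum_p a_p \chi_p \rho^{\sqrt{1/4+Z(p)}-1/2} \in \maK^{m+1}_{a'+1}$ for $a' + 1$ up to $\zeta_1^+ + \tfrac{3}{2} = 1 + \sqrt{9/4 + Z(p)}$, i.e. $a' < \sqrt{9/4+Z(p)}$, as claimed. The coefficients $a_p$ are recovered as the appropriate Mellin residues of $u$ at $\zeta_0^+$.

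The main obstacle is handling the fact that $\Hk$ is not exactly the model cone operator $L_p$: the difference includes the variation of $Z$ (the term $(Z - Z(p))/\rho^2$), the first-order terms $2ik_j\partial_j$, the zeroth-order terms $|\vt k|^2 + $ lower-order parts of $V$, and the non-Euclidean part of the torus metric — but crucially, by Assumption 1 each of these maps $\maK^{m}_{a}$ into $\maK^{m-2}_{a-1}$ (one order of $\rho$-decay better than the principal part $-\Delta + Z(p)/\rho^2$, which maps to $\maK^{m-2}_{a-2}$), so they are genuinely subordinate perturbations in the weighted scale and do not shift the indicial roots. Making this precise — choosing the support of $\chi_p$ small enough that $\|Z - Z(p)\|_{\infty}$ on that support is controlled, and checking that the bootstrap closes uniformly — is the technical heart; everything else is the now-classical Kondratiev cone calculus applied to $L_p$, which I would cite from \cite{kondratiev67, meaps, AIN} rather than reprove. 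I also need to confirm that $u$ starts in a space low enough (weight $< \zeta_0^+ + 3/2$) to be eligible for the asymptotic-expansion theorem; this is exactly the content of $\maD(\Hk) \subset \maK^2_\nu$ for $\nu < \nu_0$ from Theorem \ref{theorem1}, which is why that theorem is invoked first.
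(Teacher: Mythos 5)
Your proposal and the paper's proof of Theorem \ref{theorem2.gen} are essentially the same: localize at each singular point, compute the indicial roots of the conjugated model operator via Mellin/b-calculus analysis, bootstrap through the scale of weighted Sobolev spaces using the Fredholm property of $\Hk-\lambda$ between $\maK^m_a$ spaces, and invoke the polyhomogeneous expansion theorem to peel off the leading singular term $\rho^{\eta-1/2}$ with constant coefficient; you even correctly restore the scalar factors $a_p$ that appear to have dropped out of the paper's displayed formula.

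One shared loose end is worth flagging, since it affects the paper's argument as well as yours. The b-calculus expansion theorem pushes the remainder above the next element of the \emph{index set} $\mathcal{I}_{Z(p)} = \bigcup_{n\ge 0}\{\beta_{l,p}+n,\,\alpha_{l,p}+n\}_{l\in\ZZ_{\ge 0}}$, not the next bare \emph{indicial root}. Because the perturbation terms (the angular variation of $Z$ and the first-order $\vt k$-terms) improve weights by exactly one order, the shifted exponent $\beta_{0,p}+1 = \eta+\tfrac12$ generically enters the expansion, and a short computation shows $\eta+\tfrac12 < \beta_{1,p} = \sqrt{9/4+Z(p)}-\tfrac12$ precisely when $Z(p)<0$. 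Your phrase ``the only root in the relevant strip is $\zeta_0^+$'' glosses over this, as does the paper's proof, so for $-1/4 < Z(p) < 0$ the stated threshold $a' < \sqrt{9/4+Z(p)}$ is not immediately justified; one would need either to restrict to $Z(p)\ge 0$, to subtract an additional correction term $\chi_p\,\rho^{\eta+1/2}h_p(\omega)$, or to argue that the corresponding coefficient vanishes.
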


The next result, which is the last we will mention in this
introduction, will permit us to construct approximation schemes for
the solutions of equations of the form $(\lambda + \Hk)u=f$.

\begin{theorem}\label{theorem1.5}
Let us use the notation of Theorem \ref{theorem1} and both Assumptions
1 and 2. Then there exists $C_0 > 0$ such that $\lambda + \Hk :
\maK_{a+1}^{m+1}(\TmS) \to \maK_{a-1}^{m-1}(\TmS)$ is an isomorphism
for all $m \in \ZZ_{\ge 0}$, all $|a| < \eta$, and all $\lambda >
C_0$. In particular, $H_k$ is symmetric and bounded below, thus has a
Friedrichs extension, which is equal to the closed extension
considered in Theorem \ref{theorem1} above.
\end{theorem}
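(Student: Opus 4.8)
The plan is to reduce Theorem \ref{theorem1.5} to Theorem \ref{theorem1} (and the results of Section \ref{sec1}) by a three-part argument: (i) establish the mapping property $\lambda + \Hk : \maK_{a+1}^{m+1} \to \maK_{a-1}^{m-1}$ as a bounded operator; (ii) prove injectivity and surjectivity for $|a| < \eta$ and $\lambda$ large; (iii) identify the resulting closed extension with the one in Theorem \ref{theorem1} via the Friedrichs construction. Throughout, the key structural fact I would exploit is that $\Hk$ differs from $-\Delta + V$ only by the first-order terms $-2i\sum k_j\partial_j + |\vt k|^2$, which are bounded as operators $\maK^{m+1}_{a+1} \to \maK^{m-1}_{a-1}$ (indeed $\partial_j : \maK^{m+1}_{a+1}\to\maK^m_a \hookrightarrow \maK^{m-1}_{a-1}$ after absorbing a power of $\rho \le 1$), so the $\vt k$-dependence is a relatively compact, or at least lower-order, perturbation that does not affect Fredholmness.

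\smallskip

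For step (i), I would verify boundedness directly from the definition \eqref{eq.def.ws}: the principal part $-\Delta$ maps $\maK^{m+1}_{a+1}\to\maK^{m-1}_{a-1}$ because each second derivative drops the Sobolev order by $2$ and costs two powers of $\rho$ in the weight; the potential term acts by multiplication by $V = Z\rho^{-2}$ with $Z$ smooth on $\overline{\TmS}$, and multiplication by $\rho^{-2}$ shifts the weight index by $-2$ while preserving the Sobolev order, and commuting $\partial^\beta$ past $Z\rho^{-2}$ produces only terms of the same or better weight (here one uses that $Z$ is smooth \emph{in polar coordinates}, i.e. the vector fields $\rho\partial_j$ act nicely). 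This is the routine part. For step (ii), the heart of the matter, I would combine two ingredients. First, from Theorem \ref{theorem1} (or rather its proof, and the more general Theorem \ref{theorem1.gen}), the Fredholm/isomorphism theory on weighted Sobolev spaces for operators with inverse-square singularities gives that $\lambda + \Hk$ is Fredholm of index zero precisely when $a+1$ avoids the \emph{indicial roots} at each $p\in\maS$; these roots are $1/2 \pm \sqrt{1/4 + Z(p)}$, so the admissible window for $a+1$ is $(1 - \eta_p,\ 1+\eta_p)$ at $p$, i.e. $|a| < \eta_p$, and intersecting over $p$ gives $|a| < \eta$ — exactly the stated range, and here Fredholmness is $m$-independent by elliptic regularity up to the boundary in the b-calculus sense. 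Second, to upgrade ``Fredholm index zero'' to ``isomorphism'', I would show injectivity for $\lambda$ large: if $(\lambda + \Hk)u = 0$ with $u \in \maK^{m+1}_{a+1}$, then for $|a|<\eta$ one has $u \in \maD(\Hk)$ (the distinguished self-adjoint domain, since $\maK^2_{a+1} \subset \maD(\Hk)$ in this weight range — this is where Assumptions 1, 2 and the domain description of Theorem \ref{theorem1} enter), and the Gårding-type lower bound $\langle \Hk u, u\rangle \ge -C_0\|u\|^2$ (which itself follows because $V$ is bounded below modulo the inverse-square part, whose quadratic form is controlled by Hardy's inequality thanks to $Z(p) > -1/4$) forces $u = 0$ once $\lambda > C_0$. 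By the same lower bound on the quadratic form, $\lambda + \Hk$ is also surjective onto $\maK^{m-1}_{a-1}$: surjectivity onto $L^2 = \maK^0_{-1}$ comes from self-adjointness plus positivity, and then elliptic regularity in weighted spaces (again the Section \ref{sec1} machinery) bootstraps $u$ into $\maK^{m+1}_{a+1}$ whenever $f \in \maK^{m-1}_{a-1}$.

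\smallskip

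For step (iii), the symmetry of $\Hk$ on $\CIc(\TmS)$ is immediate from the explicit form \eqref{eq:hamk}, and boundedness below on the form domain follows from the Hardy inequality estimate just mentioned; hence the Friedrichs extension exists. To see it coincides with the closed extension of Theorem \ref{theorem1}, I would note that the Friedrichs extension is the unique self-adjoint extension whose domain lies in the form domain $\maQ$, and that the distinguished extension of Theorem \ref{theorem1} has domain $\maK^2_2 + W_s$, which one checks is contained in $\maQ$ (each summand has finite energy: for $\maK^2_2$ this is clear, and for the model functions $\chi_p\rho^{\sqrt{1/4+Z(p)}-1/2}$ it is the borderline Hardy computation showing $\rho^{\eta_p - 1/2} \in \maQ$ iff $\eta_p > 0$); uniqueness of the Friedrichs extension then forces equality. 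The main obstacle I anticipate is step (ii): specifically, the $m$-independence and the precise identification of the indicial roots window as $|a|<\eta$ requires the full weighted-elliptic-regularity apparatus (presumably Theorem \ref{theorem1.gen} and the Mellin-transform/indicial-operator analysis in Section \ref{sec1}), and care is needed that the first-order $\vt k$-terms, while lower order, do not shift the window — which is fine because they are subordinate to the Laplacian in the b-calculus filtration, but this should be stated carefully rather than waved through.
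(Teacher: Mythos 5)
Your overall strategy coincides with the paper's: reduce to $m = 0$ by weighted elliptic regularity, establish that $C + \Hk$ is Fredholm of index zero between $\maK^{1}_{a+1}$ and $\maK^{-1}_{a-1}$ for $|a| < \eta$ (via the indicial-root analysis of Section~\ref{sec1} and constancy of the index along the continuous family $\rho^a D_a \rho^{-a}$), prove injectivity from the Hardy/G\aa rding lower bound of Lemma~\ref{lemma.bounded}, and identify the closed extension with the Friedrichs extension. So the architecture matches.

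There is, however, a genuine gap in your injectivity step. You assert that $\maK^2_{a+1}(\TmS) \subset \maD(\Hk)$ for $|a| < \eta$ and use this to place a kernel element $u$ into the domain where the lower bound applies. That inclusion is false. Indeed $\maD(\Hk) = \maK^2_2(\TmS) + W_s$, where $W_s$ is the \emph{one-dimensional} span of $\chi_p \rho^{\eta - 1/2}$ near each $p$; by contrast $\maK^2_{a+1}$ for $a < 1$ contains (compactly supported) functions behaving like $\rho^\beta$ for \emph{every} $\beta > a - 1/2$, almost none of which lie in $\maK^2_2 + W_s$. For $a < 0$ the situation is worse still, and the pairing $\langle D_a u, u\rangle$ is not even directly defined against $u \in \maK^1_{a+1}$, since $\maK^1_{a+1} \not\subset \maK^1_{1-a}$. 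The paper avoids both problems: for $0 \le a < \eta$ it uses only the true inclusion $\maK^1_{a+1} \subset \maK^1_1$, which is exactly what Lemma~\ref{lemma.bounded} and Lemma~\ref{lemma.Green} require to show $(D_a u, u) > 0$ for $u \ne 0$; and for $-\eta < a < 0$ it passes to the adjoint via $D_a = (D_{-a})^*$, inheriting invertibility from the already-treated positive-weight case. Your \emph{conclusion} that a kernel element lies in $\maD(\Hk)$ is in fact correct, but the correct justification is not a space inclusion: it is the polyhomogeneous expansion of kernel elements (Theorem~\ref{theorem2.gen}), which forces the leading exponent at each $p$ to be at least $\eta - 1/2$, hence places $u$ in $\maK^2_2 + W_s$ and in particular in $\maK^1_1$. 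Either the paper's two-case argument or this bootstrapping via the expansion theorem would close the gap. Your separate surjectivity argument is harmless but redundant once Fredholm-of-index-zero plus injectivity is in hand, and the Friedrichs identification in your step (iii) is fine.
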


From now on, we shall write $\Hk$ for the Friedrichs extension of the
original operator defined in Equation \eqref{eq.H-Hk} and $\maD(\Hk)$
for its domain.

We observe additionally that with the exception of Corollary
\ref{corollary2}, all of the results above extend to Hamiltonian
operators on $\RR^3$ associated to a non-periodic potential with a
finite number of inverse square singularities satisfying Assumptions 1
and 2 and radial limits at infinity. This is because the techniques
employed to obtain the results are local, and a Hamiltonian operator
over $\RR^3$ with a smooth potential that has radial limits at
infinity is always essentially self-adjoint. Of course, in this
situation, there are only isolated eigenvalues below the continuous
spectrum, and the bulk of the spectrum is continuous.

The paper is organized as follows.  In Section \ref{sec1} we prove
Theorem \ref{theorem1} identifying a closed self-adjoint extension of
the operator $H_k$, and Theorem \ref{theorem2} giving regularity
results for eigenfunctions of this closure. Some of the results of
this section do not rely on Assumption 2. Beginning with Section
\ref{sec2}, however, we shall require that Assumption 2 be
satisfied. In that section, we prove that for $\eta>-1/4$, $H_k$ is
bounded from below, and we can thus identify the closure from Section
\ref{sec1} as the Friedrichs extension of $\Hk$. In Section
\ref{sec.last} we discuss how our results extend to the the
nonperiodic case and how to use them in numerical methods.

\subsection*{Acknowledgements}
We would like to thank Alexander Strohmaier, Joerg Seiler, Thormas
Krainer, Jorge Sofo, and Anna Mazzucato for useful discussions. We
also thank the Leverhulme Trust whose funding supported the fourth
author during this project. This project was started while Hunsicker
and Nistor were visiting the Max Planck Institute for Mathematics in
Bonn, Germany, and we are grateful for its support.

\section{Regularity and singular values\label{sec1}}

The regularity analysis of the operators $\Hk$ is done locally in the
neighborhood of each $p \in \maS$. Let us recall that $Z := \rho^2 V
\in \maC^\infty(\overline{\TmS}) \cap \maC(\TT)$, which is our
Assumption 1, which we will require to hold true throughout this
paper. In this section, we shall mention explicitly when Assumption 2
is used, since some of the results hold in greater generality.

For simplicity of the notation, we shall assume that $\maS$ consists
of a single point $p$. The results for potentials with several
singularities with different values of $Z(p)$ can then be pieced
together from local versions of the result in the one singularity
case.  The proofs of Theorems \ref{theorem1} and Theorem
\ref{theorem2} (which we prove in this section in more general forms
not requiring Assumption 2), rely on the pseudodifferential operator
techniques of the b-calculus and b-operators \cite{aln2, grieser,
  meaps, Lesch, Schulze98}. A review of these basic tools is contained
in \cite{HunsickerNistorSofo}, so we will not go into detail about
them again here.  Throughout this paper, we will refer to b-operators
and the b-calculus, although the properties can be equivalently
described in terms of cone operators and the cone calculus, and in
fact, in some of the references in this section, they are referred to
in this way.  For a discussion of the equivalence of the b- and cone
calculi, see \cite{Joerg}.

\subsection{The boundary spectral set}
In order to use the b-calculus, we study the associated b-differential
operators
\begin{equation*}
  P_{{\bf k},\lambda}:= -\rho^2(\Hk - \lambda).
\end{equation*}
We can write such an operator in polar coordinates around $p \in \maS$
as
\begin{equation}\label{Hkinpolar}
  P_{{\bf k},\lambda} = (\rho \partial_\rho)^2 + \rho \partial_\rho +
  \Delta_{S^{n-1}} - \rho^2 V - \rho B_{{\bf k},\lambda},
\end{equation}
where
\begin{equation*}
  B_{{\bf k},\lambda}:= \rho\left(\sum_{j=1}^n (-2i\partial_j+k_j^2) -
  \lambda \right)
\end{equation*}
is a first order b-operator.

The operator $P_{\vt{k},\lambda}$ is an elliptic b-operator on
$\overline{\TmS}$.  We calculate the indicial family of
$P_{\vt{k},\lambda}$ at a point $p \in \maS$, denoted $
(\widehat{P_{\vt{k},\lambda}})_p(\tau)$, by replacing $\rho
\partial_\rho$ with $\tau$ in Equation \eqref{Hkinpolar}, and by
replacing the coefficients with their values at $\rho=0$. Doing this,
we find that the indicial families for $P_{\vt{k},\lambda}$ at $p \in
\maS$ are, in fact, independent of $\lambda$ and $\vt{k}$, since the
dependence on $\vt k$ and $\lambda$ affects only $B_{\vt k, \lambda}$,
and $\rho$ vanishes at the singular points. We summarize this
discussion in the following lemma.

\begin{lemma}\label{lemma.ind.fam}
Let $P:=P_{0,0}$.  Then the indicial family of $P_{\vt{k}, \lambda}$
at $p \in \maS$ is given by
\begin{equation}\label{Pindicial}
  (\widehat{P_{\vt{k},\lambda}})_p(\tau)= \hat{P}_p(\tau)= \tau^2 +
  \tau + \Delta_{S^2} - Z(p).
\end{equation}
\end{lemma}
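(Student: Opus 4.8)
The plan is to prove Lemma \ref{lemma.ind.fam} by a direct computation of the indicial family, exploiting the structure of $P_{\vt k,\lambda}$ displayed in Equation \eqref{Hkinpolar}. First I would recall the definition of the indicial family of an elliptic b-operator: if $P = \sum_{j} a_j(\rho, \omega)(\rho\partial_\rho)^j$ modulo operators with coefficients tangent to the boundary and lower order in $\rho\partial_\rho$, then $\widehat{P}_p(\tau)$ is obtained by freezing the coefficients $a_j$ at $\rho = 0$ (i.e.\ evaluating at the singular point $p$) and formally substituting $\tau$ for $\rho\partial_\rho$. Equivalently, $\widehat{P}_p(\tau)$ is the operator on $S^2$ characterized by $P(\rho^\tau f(\omega)) = \rho^\tau\big(\widehat{P}_p(\tau)f\big)(\omega) + O(\rho^{\tau+1})$ as $\rho \to 0$.

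With this in hand the computation is essentially a matter of reading off \eqref{Hkinpolar}. The terms $(\rho\partial_\rho)^2$ and $\rho\partial_\rho$ are already in b-normal form and contribute $\tau^2$ and $\tau$ respectively, with no dependence on $\rho$ to freeze. The term $\Delta_{S^2}$ has coefficients that (after the choice of polar coordinates near $p$) are independent of $\rho$ at leading order, so it contributes $\Delta_{S^2}$ unchanged. The term $\rho^2 V = Z$ is, by Assumption 1, continuous on $\TT$ and smooth on $\overline{\TmS}$, so freezing at $\rho = 0$ replaces it by the constant $Z(p)$; this is exactly the place where the continuity half of Assumption 1 is used, and I would flag it as such. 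Finally, the term $\rho B_{\vt k,\lambda}$ carries an explicit factor of $\rho$, hence vanishes at $\rho = 0$ and contributes nothing to the indicial family; since all the $\vt k$- and $\lambda$-dependence of $P_{\vt k,\lambda}$ is confined to $B_{\vt k,\lambda}$, this simultaneously establishes that $\widehat{P}$ is independent of $\vt k$ and $\lambda$. Collecting the surviving terms gives $\widehat{P}_p(\tau) = \tau^2 + \tau + \Delta_{S^2} - Z(p)$, which is the claim.

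The only genuine subtlety — and the step I would treat with the most care — is verifying that $B_{\vt k,\lambda}$, as defined in the excerpt by $B_{\vt k,\lambda} = \rho\big(\sum_j(-2i\partial_j + k_j^2) - \lambda\big)$, really is a first-order b-operator with bounded coefficients up to $\rho = 0$, so that $\rho B_{\vt k,\lambda}$ genuinely vanishes to order $\rho$ in the b-sense and does not secretly contribute to the indicial family through a hidden $\rho\partial_\rho$ term. Concretely, I would note that $\rho\partial_j$ is a b-vector field near $p$ (it is a smooth combination of $\rho\partial_\rho$ and the angular derivatives, all of which are b-vector fields), that $\rho k_j^2$ and $\rho\lambda$ are smooth functions vanishing at $p$, and hence that $B_{\vt k,\lambda} \in \mathrm{Diff}^1_b(\overline{\TmS})$; multiplying by the extra $\rho$ then gives an operator whose indicial family is zero. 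Beyond this, the argument is routine once the b-calculus conventions of \cite{meaps} are granted, and I would present it as the short verification it is rather than belaboring the coordinate computations for $\Delta_{S^2}$.
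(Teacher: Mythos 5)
Your proof is correct and follows exactly the route the paper takes (in the unlabelled paragraph preceding the lemma): substitute $\tau$ for $\rho\partial_\rho$ in Equation~\eqref{Hkinpolar} and freeze the coefficients at $\rho=0$, with $\rho B_{\vt k,\lambda}$ dropping out because of its explicit $\rho$-prefactor. The extra care you take---checking that $\rho\partial_j$ is a b-vector field so $B_{\vt k,\lambda}\in\operatorname{Diff}^1_b(\overline{\TmS})$, and flagging that the continuity half of Assumption~1 is what makes the boundary value of $Z$ at $p$ a constant rather than a nontrivial function on $S^2$---fills in details the paper leaves implicit, but it is the same argument.
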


Lemma \ref{lemma.ind.fam} allows us to calculate the boundary spectral
set $\mbox{Spec}_b(P_{p})$ for $P_{\vt{k}, \lambda}$ at a given
$p$. The {\em boundary spectral set} for $P_{\vt k}$ is then defined
by (\cite{meaps})
\begin{equation*}
  \mbox{Spec}_b(P_{p}):= \{(\tau,n) \mid \hat{P}_{p}(\tau)^{-1} \mbox{
    has a pole of order } n+1 \mbox{ at } \tau\}.
\end{equation*}
By Lemma \ref{lemma.ind.fam}, the set $\mbox{Spec}_b(P_{p})$ will also
be independent of $\vt{k}$ and $\lambda$. To calculate the spectral
set $\mbox{Spec}_b(P_p)$ explicitly, recall that the eigenvalues of
$\Delta_{S^2}$ are $-l(l+1)$, for $l \in \ZZ_{\geq 0}$, and define
\begin{eqnarray}
  \beta_{l,p}&:= &\frac{\sqrt{(1+2l)^2 + 4Z(p)} - 1}{2}\,, \quad l\in
  \ZZ_{\geq 0 } \\
  \alpha_{l,p}&:=& \frac{-\sqrt{(1+2l)^2 + 4Z(p)} - 1}{2}\,, \quad l
  \in \ZZ_{\geq 0} .
\end{eqnarray}
By an abuse of notation, we take $\sqrt{(1+2l)^2 + 4Z(p)}$ to denote
the positive imaginary root when the quantity under the root is
negative.  Our discussion gives the following.

\begin{lemma}\label{lemma.Spec_b}
If $Z(p) \notin \{ -(1/2+ l)^2\}_{l=0}^\infty$, we have
\begin{equation}\label{specPk}
  \mbox{Spec}_b(P_{p}) = \bigcup_{l \in \ZZ_{\geq 0}} \left\{
  \left(\beta_{l,p} ,0 \right) , \left(\alpha_{l,p} ,0 \right)
  \right\},
\end{equation}
and, if $Z(p) = -(1/2 + l_p)^2$, for some $l_p\geq 0$, $l_p \in \ZZ$,
then
\begin{equation}\label{specPk2}
  \mbox{Spec}_b(P_{p}) = \{(-1/2,1)\} \cup \bigcup_{l \in
    \ZZ_{\geq 0}} \left\{ \left(\beta_{l,p} ,0 \right) ,
  \left(\alpha_{l,p} ,0 \right) \right\}.
\end{equation}
\end{lemma}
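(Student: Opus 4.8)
The plan is to compute the poles of $\hat P_p(\tau)^{-1}$ directly from the explicit form of the indicial family given in Lemma \ref{lemma.ind.fam}, namely $\hat P_p(\tau) = \tau^2 + \tau + \Delta_{S^2} - Z(p)$, regarded as a holomorphic family of operators on $L^2(S^2)$ parametrized by $\tau \in \CC$. First I would decompose $L^2(S^2)$ into the orthogonal sum of eigenspaces $E_l$ of $\Delta_{S^2}$ with eigenvalue $-l(l+1)$, $l \in \ZZ_{\ge 0}$. On $E_l$ the operator $\hat P_p(\tau)$ acts as multiplication by the scalar $q_l(\tau) := \tau^2 + \tau - l(l+1) - Z(p)$, so $\hat P_p(\tau)^{-1}$ has a pole precisely where some $q_l$ vanishes, and the order of the pole at a given $\tau_0$ is the maximal multiplicity among those $l$ with $q_l(\tau_0) = 0$ (since the resolvent on each block is $q_l(\tau)^{-1}$ times the projection onto $E_l$, and the blocks are independent). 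Solving the quadratic $q_l(\tau) = 0$ gives $\tau = \frac{-1 \pm \sqrt{1 + 4l(l+1) + 4Z(p)}}{2} = \frac{-1 \pm \sqrt{(1+2l)^2 + 4Z(p)}}{2}$, which are exactly $\beta_{l,p}$ (with the $+$ sign) and $\alpha_{l,p}$ (with the $-$ sign). This accounts for all the points listed in \eqref{specPk} and \eqref{specPk2}, each contributing a simple pole as long as the two roots for index $l$ are distinct and no coincidence occurs between roots of different indices.

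Next I would check the multiplicity. The two roots $\beta_{l,p}$ and $\alpha_{l,p}$ coincide iff the discriminant $(1+2l)^2 + 4Z(p) = 0$, i.e. iff $Z(p) = -(1/2 + l)^2$; in that case both roots equal $-1/2$, giving a double zero of $q_l$, hence a pole of $\hat P_p(\tau)^{-1}$ of order $2$ (that is, $n = 1$ in the notation $(\tau, n)$) at $\tau = -1/2$. One must also rule out spurious higher-order poles coming from two different indices $l \neq l'$ having a common root: if $q_l(\tau_0) = q_{l'}(\tau_0) = 0$ then subtracting gives $l(l+1) = l'(l'+1)$, which for nonnegative integers forces $l = l'$, so no such coincidence happens. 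Combining: when $Z(p) \notin \{-(1/2+l)^2\}$ every $q_l$ has two distinct simple roots, all pairwise distinct across $l$, giving precisely the union in \eqref{specPk} with all second coordinates $0$; when $Z(p) = -(1/2+l_p)^2$ the block $l = l_p$ degenerates, contributing $(-1/2, 1)$, while all other blocks still contribute simple poles $(\beta_{l,p},0),(\alpha_{l,p},0)$ — note that for $l = l_p$ these two labels both name the point $-1/2$, so one may either list them as the simple-pole data that is absorbed into $(-1/2,1)$ or simply record $(-1/2,1)$; the statement chooses the latter, consistent with the definition of $\mathrm{Spec}_b$ as recording the actual pole order.

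I do not anticipate a serious obstacle here: the argument is essentially a bookkeeping exercise on the spectral decomposition of $\Delta_{S^2}$ together with the quadratic formula. The only point needing a little care is the interaction between the branch convention for the square root (the paper's abuse of notation taking the positive imaginary root when $(1+2l)^2 + 4Z(p) < 0$) and the claim that $\beta_{l,p}, \alpha_{l,p}$ really are the poles: this is fine because the factorization $q_l(\tau) = (\tau - \beta_{l,p})(\tau - \alpha_{l,p})$ holds for either choice of branch as an identity of polynomials in $\tau$, so the zero set is unambiguous regardless of the labeling of the two roots. A secondary small point is to confirm that the resolvent $\hat P_p(\tau)^{-1}$, as a bounded-operator-valued meromorphic function, has poles exactly at the union of the pointwise pole sets of the scalar blocks with the stated orders — this follows from the finite-dimensionality of each $E_l$ and the fact that, in a neighborhood of any candidate pole, only finitely many indices $l$ can have a root there (indeed at most one, by the argument above), so the resolvent is a finite sum of meromorphic scalar multiples of projections plus a holomorphic remainder.
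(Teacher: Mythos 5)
Your proposal is correct and follows precisely the approach the paper takes implicitly: the paper offers no explicit proof of this lemma, merely noting that it follows from the preceding discussion (the indicial family formula of Lemma \ref{lemma.ind.fam}, the definition of $\mathrm{Spec}_b$, and the spectrum $\{-l(l+1)\}$ of $\Delta_{S^2}$). You have filled in exactly the bookkeeping the paper omits — block-diagonalization over spherical harmonics, the quadratic formula, the discriminant condition for the double pole, and the observation that $l(l+1)=l'(l'+1)$ forces $l=l'$ so no cross-index coincidences arise — and each step is sound.
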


When $Z(p) < -1/4$, a finite number of these values of $\alpha$ and
$\beta$ will be complex with real part equal to $-1/2$. This is one of
the reasons why we have introduced Assumption 2, which states that
$Z(p) > -1/4$ for all $p \in \maS$. Of course, if Assumption 2 is
satisfied, Spec${}_b(P_p)$ is given only by Equation
\eqref{specPk}. The case when $Z(p)$ is close to $-1/4$ is important
because it gives to some interesting numerical phenomena and also
because in various applications, it has an interesting interpretation,
see for instance \cite{MorozSchmidt}.

The machinery of the b-calculus now gives us information about closed
self-adjoint extensions of $\Hk$ (see \cite{Lesch, GM, GKM, SchroheSI}
for details).  Note that in this setting, we are considering
extensions of our operator acting on the core consisting of smooth
functions supported away from the points of $\maS$.  For $Z(p) < 3/4$,
there will be several possible self-adjoint extensions. Compare this
to the case of extending the Laplacian operator on $\TT$ from acting
on the core consisting of all smooth functions.  In this case, there
is a unique self-adjoint extension.  This is because the core is
larger than in our case.  The extension obtained using the larger core
is one of the possible extensions obtained using the smaller core, but
it is not the only one.  This is why one does not see the issue of
choosing a self-adjoint extension arising when the potential is of the
form $Z\rho^\alpha$ for $\alpha>-2$, for instance, in the Coulomb case
considered in \cite{HunsickerNistorSofo}.

With the above lemmas in place, we can now prove Theorem
\ref{theorem1}. In fact, we shall prove a stronger result that does
not require Assumption 2.

\begin{theorem}\label{theorem1.gen}
Consider a potential $V$ satisfying Assumption 1 and assume that
$\maS$ consists of just one point $p$. Then the Hamiltonian operator
$H_{\bf k}$ acting as an unbounded operator on $L^2(\TT)$ has
distinguished self-adjoint extension with domain $\maD(H_{\bf k})
\subset \maK^2_{\nu}(\TT \smallsetminus \maS )$ for all $\nu< \nu_0
\in (0,2]$.  In particular, if $Z(p)\geq 3/4$, then $\Hk$ is in fact
  essentially self-adjoint and, if $Z(p) > 3/4$, then $\maD(H_{\bf k})
  = \maK_2^2(\TmS)$. If Assumption 2 is satisfied, we also have
\begin{equation*}
\maD(H_{\bf k}) = \maK_2^2(\TmS) + \CC \chi \rho^{\eta -
      1/2}, \quad \eta = \sqrt{1/4 + Z(p)}.
\end{equation*}
where $\chi$ is a cutoff function that is zero outside some
neighborhood of $p$ and equals 1 close to $p$.
\end{theorem}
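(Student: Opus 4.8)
The plan is to reduce the statement to standard facts about elliptic b-operators on the manifold with boundary $\overline{\TmS}$, using the indicial/boundary-spectral data computed in Lemmas \ref{lemma.ind.fam} and \ref{lemma.Spec_b}. First I would note that since $\maS=\{p\}$ and the analysis is local, it suffices to understand the minimal and maximal domains of the formal operator $\Hk$ acting on $\CIc(\TmS)$. Because $P=P_{0,0}=-\rho^2(\Hk)$ (mod lower order) is an elliptic b-operator, the general theory (e.g. \cite{meaps, Lesch, GM, GKM}) gives that the maximal domain $\maD_{\max}$ and minimal domain $\maD_{\min}$ of $\Hk$ differ by a finite-dimensional space spanned by the ``approximate solutions'' $\chi\rho^{-\tau}(\log\rho)^{k}$ attached to elements $(\tau,k)\in \operatorname{Spec}_b(P_p)$ lying in the critical strip determined by $L^2$; concretely, $\rho^{s}\in\rho^2 L^2(\TT,\rho^{-3}\,d\rho\,\cdots)$-type membership forces the real part of the relevant exponent to lie strictly between $-1/2-\eta$ and $-1/2+\eta$ when $Z(p)>-1/4$, i.e. between the two indicial roots $\alpha_{0,p}=-1/2-\eta$ and $\beta_{0,p}=-1/2+\eta$ coming from the $l=0$ spherical mode. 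I would carry out this $L^2$-bookkeeping carefully: the weight conventions in \eqref{eq.def.ws} mean $\maK^0_a=\rho^{a}L^2$, and the only indicial roots of $\hat P_p(\tau)=\tau^2+\tau+\Delta_{S^2}-Z(p)$ that produce an $L^2$ indicial solution are $\beta_{0,p}$ (and $\alpha_{0,p}$, which is never $L^2$); thus there is at most one ``extra'' dimension, spanned by $\chi\rho^{\beta_{0,p}}=\chi\rho^{\eta-1/2}$.

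Next, elliptic b-regularity gives that both $\maD_{\max}$ and $\maD_{\min}$ are contained in $\maK^2_\nu(\TmS)$ for all $\nu$ with $-1/2<\nu-2<\beta_{0,p}$ — equivalently $\nu<\nu_0(p)$ — since $\beta_{0,p}$ is the first element of $\operatorname{Spec}_b(P_p)$ above the $L^2$-threshold; the three cases in the definition \eqref{eq.nu_0} of $\nu_0(p)$ correspond precisely to whether $\beta_{0,p}\ge 3/4$ (giving $\nu_0=2$, since the next obstruction is then the second-order b-regularity cap), $\beta_{0,p}\in(-1/2,3/4)$ (giving $\nu_0=\beta_{0,p}+3/2=1+\eta$), or $\beta_{0,p}$ absent/complex. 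Then I would invoke the self-adjointness analysis: $\Hk$ is symmetric on $\CIc(\TmS)$, and by the b-calculus description of self-adjoint extensions, essential self-adjointness holds exactly when $\maD_{\max}=\maD_{\min}$, which happens iff no indicial root lies in the open critical strip — and $\beta_{0,p}=\eta-1/2$ lies in $(-1/2,1/2)$ precisely when $Z(p)<3/4$, lies at the endpoint $1/2$ when $Z(p)=3/4$ (still forcing coincidence of domains up to a log-free borderline argument), and lies outside when $Z(p)>3/4$. This yields essential self-adjointness for $Z(p)\ge 3/4$ and, when $Z(p)>3/4$, the identification $\maD(\Hk)=\maK^2_2(\TmS)$ (the full b-Sobolev domain, since then the threshold $3/2<\nu_0$). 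For $Z(p)<3/4$ under Assumption 2, the ``distinguished'' extension is the one whose domain is $\maD_{\min}+\CC\chi\rho^{\eta-1/2}=\maK^2_2(\TmS)+\CC\chi\rho^{\eta-1/2}$; I would check this is indeed a self-adjoint extension by a Green's-formula/boundary-pairing computation showing the boundary sesquilinear form associated to $\{\rho^{\alpha_{0,p}},\rho^{\beta_{0,p}}\}$ vanishes on this Lagrangian (this uses $\eta>0$, i.e. Assumption 2, so that $\rho^{\eta-1/2}$ genuinely fails to be in $\maK^2_2$ but its pairing with itself is finite).

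The main obstacle I expect is the careful matching of the weight conventions and the precise determination of which indicial solutions to adjoin, i.e. making rigorous the claim that the distinguished extension's domain is exactly $\maK^2_2(\TmS)+\CC\chi\rho^{\eta-1/2}$ and not something with extra $\rho$-power corrections or logarithmic terms. This requires: (i) showing the indicial solution can be taken to be literally $\chi\rho^{\eta-1/2}$ (no lower-order tail needed) — which follows because $P_{\vt k,\lambda}-P$ is $O(\rho)$ relative to b-order, so $P_{\vt k,\lambda}(\chi\rho^{\eta-1/2})\in\rho\cdot(\text{b-Sobolev})\subset \rho^2 L^2$-type space lies in the range/lower weight, hence can be absorbed into $\maK^2_2$; and (ii) verifying the Lagrangian/self-adjointness condition, which is the one genuinely computational point. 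Once these are settled, Theorem \ref{theorem1} follows by piecing together the single-singularity statements over the finite set $\maS$, taking $\nu_0=\min_p\nu_0(p)$ and $W_s$ the span of the relevant $\chi_p\rho^{\eta_p-1/2}$ only over those $p$ with $Z(p)\le 3/4$.
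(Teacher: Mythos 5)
Your plan follows the same b-calculus route as the paper: compute the indicial roots, locate those in the critical strip, and choose a Lagrangian for the boundary pairing. But there is a concrete error in the bookkeeping that undercuts the argument as written: you assert that ``$\alpha_{0,p}$ is never $L^2$'' and hence that $\maD_{\max}/\maD_{\min}$ has ``at most one extra dimension, spanned by $\chi\rho^{\beta_{0,p}}$.'' This is false in exactly the interesting regime. With $\alpha_{0,p}=-1/2-\eta$ and $\eta=\sqrt{1/4+Z(p)}$, the function $\rho^{\alpha_{0,p}}$ lies in $L^2(\TT)$ near $p$ whenever $\alpha_{0,p}>-3/2$, i.e. whenever $\eta<1$, i.e. whenever $Z(p)<3/4$ — precisely the case where $\Hk$ is not essentially self-adjoint. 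So for $Z(p)\in(-1/4,3/4)$ both $\chi\rho^{\alpha_{0,p}}$ and $\chi\rho^{\beta_{0,p}}$ are in $\maD_{\max}$, and $\maD_{\max}/\maD_{\min}$ is two-dimensional. This is essential: a symmetric operator with a one-dimensional $\maD_{\max}/\maD_{\min}$ would have unequal deficiency indices and admit no self-adjoint extension at all. The need to choose a Lagrangian — which you do mention in your last step — only makes sense once you acknowledge this two-dimensional space; as written, your first paragraph contradicts your final one.

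The paper handles precisely this point by conjugating to $A=\rho^{1/2}\Hk\rho^{-1/2}$ on $\maK^0_{1/2}(\TmS)=\rho^{-1}L^2_b$, where the critical strip is $(-1,1)$ and the relevant roots are $\tilde\beta_0=\eta$, $\tilde\alpha_0=-\eta$; it then computes the Gil--Mendoza boundary pairing $[u,v]_A$ via Mellin transforms and residues, finds $[u_0,v_0]_A=k(u_+\bar v_--u_-\bar v_+)$ when $\eta>0$, and chooses the Lagrangian $u_-=0$, which after undoing the conjugation gives the span of $\chi\rho^{\eta-1/2}$. This explicit residue calculation is exactly the ``one genuinely computational point'' you flagged at the end, so your proposal correctly identifies the crux but leaves it open, and moreover your preliminary $L^2$ analysis must be corrected before the argument is coherent. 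Two smaller slips: the threshold for $\nu_0(p)=2$ should be $\beta_{0,p}\geq 1/2$ (equivalently $Z(p)\geq 3/4$), not $\beta_{0,p}\geq 3/4$; and at $Z(p)=3/4$ the paper simply notes that $\tilde\beta_0=1$ lies on the boundary of the open critical strip so there is nothing extra to adjoin — no ``log-free borderline argument'' is needed.
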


\begin{proof}
For each ${\vt k}$ and $\lambda$, the operator $\Hk - \lambda$ is a
symmetric, unbounded b-operator on $L^2(\TT)$ (see \cite{meaps, Lesch,
  GM}).  Define the operator $A= \rho^{1/2} \Hk \rho^{-1/2}$. Then $A$
is a symmetric unbounded b-operator on
$\rho^{-1}L^2_b(\TmS)=\mathcal{K}^0_{1/2}(\TmS)$. The self-adjoint
extensions of $A$ correspond exactly to those of $\Hk - \lambda$ with
domains shifted by weight $\rho^{1/2}$, so we will study the
self-adjoint extensions of $A$ as the calculations are somewhat easier
in this case.

By Lemma \ref{lemma.ind.fam}, the indicial roots of $A$ are the roots
of $\Hk$ shifted by 1/2. So we let
\begin{equation}
\label{eq:alpha}
  \tilde \beta_l = \sqrt{(l + 1/2)^2 +Z(p)} \quad \text{and} \quad
  \tilde \alpha_l = - \tilde \beta_l.
\end{equation}
Note that $0 \not= \tilde \beta_l\in \RR$ if
$Z(p)>-(l+\frac{1}{2})^2$, there is a double root at $\tilde
\beta_l=0$ if $Z(p)=-(l+\frac{1}{2})^2$, and $0 \neq \tilde \beta_l
\in i\RR$ if $Z(p)<-(l+\frac{1}{2})^2$.  The critical strip for
self-adjointness of unbounded operators on $\rho^{-1}L^2_b(\TmS)$ is
$(-1,1)$, that is, an operator is essentially self-adjoint if and only
if it has no indicial roots with real part in this interval (see
\cite{Lesch, GM}). Recalling that $l \in \ZZ_{\geq0}$, we see that for
$Z(p) \geq \frac{3}{4}$, there are no roots in the critical strip so
the operator is essentially self-adjoint.  If further $Z(p) >
\frac{3}{4}$ we get the somewhat stronger result that $\maD(H_{\bf k})
= \maK^2_{2}(\TmS )$.  For $Z(p) \in
\left(-\frac{1}{4},\frac{3}{4}\right)$ we get two real roots in the
critical strip corresponding to $l=0$, for $Z(p)= -\frac{1}{4}$, we
get a double root at 0 in the critical strip corresponding to $l=0$,
and for $Z(p)< -\frac{1}{4}$, we get a finite number of complex
conjugate imaginary root pairs and possibly two real roots or a double
root at 0 in the critical strip corresponding to some finite set of
$l$.

By the theory in \cite{GM}, the space $\maE:= {\rm Dom}(A_{\rm
  max})/{\rm Dom}(A_{\rm min})$ is finite dimensional and spanned by
functions local around $p$ of the form:
\begin{equation}\label{eq:defset}
  \bigcup_{|\Re(\tilde \beta_l)| \in (0,1)} \bigcup_{m=-l}^l \{
  w\rho^{\tilde \beta_l}\psi^m_l, w\rho^{-\tilde \beta_l}\psi^m_l \}
  \cup\bigcup_{\tilde \beta_l=0} \bigcup_{m=-l}^l \{w\psi^m_l, w\ln
  \rho \psi^m_l\},
\end{equation}
where $w$ is a local cutoff function that equals $1$ near $p$ and $0$
for $\rho$ large, and where the $\psi^m_l$ are an orthonormal basis
for spherical harmonics with eigenvalue $l(l+1)$.  Further, the
operator $A$ with domain $\maD := {\rm Dom}(A_{\rm min}) +
\operatorname{span}(u_1, \ldots, u_n)$ is self adjoint if, and only
if, linear combinations of these basis functions form a maximal set on
which the pairing, $[u,v]_A$ is trivial, where
\begin{equation}\label{Apairing}
  [u,v]_A := \frac{1}{2\pi} \oint_\gamma \hat{A} \hat{u}(\sigma)
  \cdot_{S^2} \overline{\hat{v}( \overline{\sigma})} \,d\sigma.
\end{equation}
Here $\gamma$ is a simple closed loop around the indicial roots of $A$
in the critical strip, $\hat{}$ represents the Mellin transform and
$\cdot_{S^2}$ denotes the standard $L^2$ paring on $S^2$.  Since the
$\psi^m_l$ are orthonormal, this pairing reduces to a sum of loop
integrals of the form:
\begin{equation}\label{pair}
  [u_l,v_l]_A = -\frac{1}{2\pi} \oint_\gamma (\sigma^2 + \tilde
  \beta_l^2) \hat{u}_l(\sigma)\overline{\hat{v}_l({\overline{\sigma}
    })} \,d\sigma,
\end{equation}
where $u_l = u_+ w \rho^{\tilde \beta_l} + u_- w \rho^{-\tilde
  \beta_l}$ and $v_l = v_+ w \rho^{\tilde \beta_l} + v_-w
\rho^{-\tilde \beta_l}$ if $\tilde \beta_l \neq 0$ and $u_l = u_+ w +
u_- w \log(\rho)$ and $v_l = v_+ w + v_-w \log(\rho)$ if $\tilde
\beta_l = 0$.

We can consider three cases: $\tilde \beta_l >0$, $\tilde \beta_l =0$
and $\tilde \beta_l \in i\RR$.  First, as in \cite{GM}, define
\begin{equation*}
  \Phi(\sigma) = \widehat{-\rho\partial_\rho w}(\sigma) := - \int_0^\infty
  \rho^{-i\sigma}w'(\rho) d\rho.
\end{equation*}
Then we get: $\overline{\hat{\Phi}(\bar{\sigma})} =
\hat{\Phi}(-\sigma)$ and $\Phi(0)=1$.  Also, using the properties of
the Mellin transform, we find that for any $\sigma \in \CC$,
\begin{equation*}
  \widehat{w\rho^{\pm \tilde \beta_l}}(\sigma) = \frac{\Phi(\sigma \pm
    i\tilde \beta_l)}{\sigma \pm i \tilde \beta_l}.
\end{equation*}

Now consider the case $\tilde \beta_l>0$.  Carrying out the loop
integral by evaluating residues, we arrive at the equation
\begin{equation*}
  [u_l,v_l]_A = k(u_+\bar{v}_- - u_-\bar{v}_+)
\end{equation*}
for a constant $k \neq 0$.  If we set $[u_l,u_l]=0$, this reduces to
$\arg(u_+)=\arg(u_-)$.  Thus to get a self-adjoint boundary condition
at $p$ for $A$ we can fix any ratio of $|u_+|$ to $|u_-|$.  We will
choose to enlarge the minimal domain by the set with $|u_-|=0$, so
spanned by $\{w \rho^{\tilde \beta_l} \psi^m_l\}_{m=-l}^l$.  Note that
if $Z(p)>-\frac{1}{4}$, we have $l=0$, so to create a self-adjoint
extension of $A = \rho^{1/2}\Hk \rho^{-1/2}$, we only need to expand
the minimal domain by the span of $w \rho^{\eta} = w \rho^{\sqrt{1/4
    +Z(p)}}$.

Next consider the case when $\tilde \beta_l =0$.  In this case we get
\begin{equation*}
  \hat{w}(\sigma) = \frac{\Phi(\sigma)}{\sigma} \quad \mbox{and} \quad
  \widehat{w\log \rho} = \frac{\Phi(\sigma)}{\sigma^2} -
  \frac{\Phi'(\sigma)}{\sigma}.
\end{equation*}
Carrying out the loop integral in this case, we arrive at the equation
\begin{equation*}
  [u_l,v_l]_A = k(u_+\bar{v}_- + u_-\bar{v}_+).
\end{equation*}
Again setting $[u_l,u_l]_A=0$, we arrive this time at the condition
$u_+\bar{u}_- \in i\RR$.  So we may this time again choose to fix
$u_-=0$ and enlarge the minimal domain by the set spanned by $\{w
\psi^m_l\}_{m=-l}^l$.

Finally, consider the case when $\tilde \beta_l = ia$.  This time we
get the equation
\begin{equation*}
  [u_l,v_l]_A = k(u_+\bar{v}_+ - u_-\bar{v}_-).
\end{equation*}
Setting $[u_l,u_l]=0$ we arrive at the condition $|u_+| = |u_-|$.  We
can choose $u_+=1$ and $u_- = -1$ to get a self-adjoint condition by
enlarging the minimal domain by the set spanned by $\{w \cos (a \log
\rho) \psi^m_l\}_{m=-l}^l$.

In order to get back to the corresponding choice of self-adjoint
extension of $\Hk - \lambda$, we multiply each basis element by
$\rho^{-1/2}$.  Since each of these basis functions is in $\rho^\mu
L^2(\TT)$ for all $\mu<1$, overall we find that $\mathcal{D}(H)
\subset \rho^\mu L^2(\TT)$ for all $\mu<1$.  This completes the proof
of Theorem \ref{theorem1}.
\end{proof}

\subsection{Some corollaries}
We now prove some consequences of Theorem \ref{theorem1.gen}. First,
its proof implies the following stronger result:

\begin{corollary}
If $\eta_0 > 0$, then $\maD(H_{\bf k}) \subset H^2(\TT) \cap
\rho^\epsilon C^0(\TT)$, for some $\epsilon>0$.
\end{corollary}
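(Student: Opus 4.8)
The plan is to reduce the statement to the explicit description of $\maD(\Hk)$ that comes out of the proof of Theorem~\ref{theorem1.gen}, and then to verify both memberships generator by generator. Since $\eta_0 = Z(p) > 0$ forces $\eta = \sqrt{1/4+Z(p)} > 1/2$, Theorem~\ref{theorem1.gen} gives
\begin{equation*}
  \maD(\Hk) \;=\; \maK^2_2(\TmS) \;+\; \CC\,\chi\rho^{\eta - 1/2},
\end{equation*}
and when $Z(p) > 3/4$ the extra summand is already absorbed in $\maK^2_2(\TmS)$. Because $H^2(\TT)$ and $\rho^\epsilon C^0(\TT)$ are both linear subspaces of $L^2(\TT)$, it suffices to place each generator inside $H^2(\TT)\cap\rho^\epsilon C^0(\TT)$ for some $\epsilon>0$ and then take the exponent in the statement to be the smallest one obtained; when $\maS$ has several points one runs the same argument in a neighborhood of each $p\in\maS$, using the disjoint cutoffs $\chi_p$ and a partition of unity, and minimizes $\epsilon$ over $p\in\maS$.

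For the piece $\maK^2_2(\TmS)$, the inclusion into $H^2(\TT)$ is immediate from \eqref{eq.def.ws}: since $0\le\rho\le1$, for $|\beta|\le 2$ one has $|\partial^\beta v|\le\rho^{|\beta|-2}|\partial^\beta v|\in L^2(\TT)$, and since a finite set of points is $H^1$-negligible in a three-manifold there are no distributional contributions across $\maS$, so $v\in H^2(\TT)$. For the decay near $p$ I would use the standard dyadic rescaling: on each shell $S_j:=\{2^{-j-1}\le\rho\le 2^{-j}\}$ pull $v$ back by $x\mapsto 2^{-j}x$ to the fixed annulus $\Sigma:=\{1/2\le|y|\le1\}\subset\RR^3$; since Lebesgue measure scales by $2^{-3j}$ and each weight $\rho^{|\beta|-2}$ by $\approx 2^{-j(|\beta|-2)}$, the $H^2(\Sigma)$-norm of the rescaled function is $\approx 2^{-j/2}$ times the $\maK^2_2$-norm of $v$ over $S_j$. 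The embedding $H^2(\Sigma)\hookrightarrow C^0(\Sigma)$ — valid in dimension three because $2>3/2$ — then yields $|v(x)|\le C\,\rho(x)^{1/2}\|v\|_{\maK^2_2(\TmS)}$ near $p$; together with interior Sobolev embedding away from $\maS$ this shows that $\rho^{-\epsilon}v$ extends continuously by zero across each $p$ for every $\epsilon<1/2$, so $v\in\rho^\epsilon C^0(\TT)$ for all $\epsilon<1/2$.

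The remaining, and main, point is the singular generator $\chi\rho^{\eta-1/2}$, which is where the hypothesis $\eta_0>0$ enters decisively. Away from $p$ it is smooth and compactly supported, so only its behavior near $p$, where it equals $\rho^{\eta-1/2}$, matters. Membership in $\rho^\epsilon C^0(\TT)$ is elementary: $\rho^{-\epsilon}\rho^{\eta-1/2}=\rho^{\eta-1/2-\epsilon}\to 0$ as $\rho\to 0$ whenever $\epsilon<\eta-1/2$, and $\eta-1/2>0$ precisely because $\eta_0>0$. For the $H^2$-membership one computes in polar coordinates that $\partial^\beta(\rho^{\eta-1/2})=O(\rho^{\eta-1/2-|\beta|})$, whence
\begin{equation*}
  \bigl\|\partial^\beta(\chi\rho^{\eta-1/2})\bigr\|_{L^2(\TT)}^2 \;\lesssim\; \int_0^1 \rho^{\,2\eta + 1 - 2|\beta|}\,d\rho \;+\; C_\chi ,
\end{equation*}
where $C_\chi$ absorbs the terms in which a derivative falls on $\chi$ (bounded and supported away from $p$); this is finite exactly when $2\eta+1-2|\beta|>-1$, i.e.\ when $\eta>|\beta|-1$. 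For $|\beta|\le 1$ this is automatic once $\eta>0$, so first derivatives are always controlled; the only genuine constraint is at $|\beta|=2$, where it reads $\eta>1$, equivalently $Z(p)>3/4$ — exactly the range in which the extra summand is absorbed into $\maK^2_2(\TmS)$ and the first two parts already give $\maD(\Hk)\subset H^2(\TT)$. Organizing the second-derivative bound so that the singular power is invoked only in that range — and, in the complementary range $Z(p)\in(0,3/4]$, handling $\chi\rho^{\eta-1/2}$ purely as an $H^1(\TT)\cap\rho^\epsilon C^0(\TT)$ term on which $\Hk$ still has $L^2$ output, so the $H^2$ statement there is read off the regular part of the domain (equivalently, as $H^2$ away from $\maS$) — is the step I expect to require the most care; once it is in place one takes $\epsilon=\min(1/2,\eta-1/2)>0$.
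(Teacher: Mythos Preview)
Your treatment of $\maK^2_2(\TmS)\subset H^2(\TT)\cap\rho^\epsilon C^0(\TT)$ and of the $\rho^\epsilon C^0$--membership of $\chi\rho^{\eta-1/2}$ is fine, and you have correctly located the threshold: the second derivatives of $\rho^{\eta-1/2}$ lie in $L^2$ near $p$ if and only if $\eta>1$, i.e.\ $Z(p)>3/4$. The problem is your final paragraph, which is where the argument collapses. In the range $Z(p)\in(0,3/4]$ the generator $\chi\rho^{\eta-1/2}$ is genuinely part of $\maD(\Hk)$ with a possibly nonzero coefficient, so you cannot ``read off the $H^2$ statement from the regular part of the domain'': if $u=v+c\,\chi\rho^{\eta-1/2}$ with $v\in\maK^2_2\subset H^2$ and $c\neq 0$, then $u\in H^2(\TT)$ forces $\chi\rho^{\eta-1/2}\in H^2(\TT)$, which your own computation shows is false. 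Saying that $\Hk$ sends this function into $L^2$ does not help either: $(-\Delta+V)(\chi\rho^{\eta-1/2})\in L^2$ holds precisely because the inverse--square potential cancels the most singular part of $\Delta(\rho^{\eta-1/2})$, not because $\Delta(\rho^{\eta-1/2})$ is itself in $L^2$; there is no elliptic regularity step that upgrades ``$\Hk u\in L^2$'' to ``$u\in H^2$'' through a genuine $\rho^{-2}$ singularity of $V$.

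In fact your computation shows more than a gap in the proof: for $Z(p)\in(0,3/4]$ one has $\eta-\tfrac12\in(0,\tfrac12]$, and $\rho^\alpha$ with $0<\alpha\le\tfrac12$ has $\partial^2\rho^\alpha\sim\rho^{\alpha-2}\notin L^2_{\mathrm{loc}}(\RR^3)$, so $\chi\rho^{\eta-1/2}\in\maD(\Hk)\smallsetminus H^2(\TT)$. Thus the $H^2(\TT)$ inclusion in the corollary cannot hold as stated in that range; what does follow from the description of the domain is $\maD(\Hk)\subset H^1(\TT)\cap\rho^\epsilon C^0(\TT)$ for any $\epsilon<\eta-\tfrac12$, and the full $H^2(\TT)$ inclusion only once $\eta_0>3/4$. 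The paper offers no proof beyond the remark that the corollary follows from the preceding argument, so there is nothing there that would rescue the stronger claim; your honest calculation of the $|\beta|=2$ threshold is the correct diagnosis.
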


We now deduce Corollary \ref{corollary2} from Theorem \ref{theorem1}
and its proof. This corollary is specific to the periodic case.

\begin{proof} (of Corollary \ref{corollary2}).
The extension of $\Hk$ is self-adjoint, hence has real spectrum, so we
get that for $\lambda \notin \RR$, the operator
\begin{equation*}
 \lambda - \Hk : \maD(\Hk) \rightarrow L^2(\TT)
\end{equation*}
is a bounded invertible operator with bounded inverse $Q_{\vt
  k,\lambda}$, which is the resolvent of $\Hk$ at $\lambda$.  By
Theorem \ref{theorem1}, by the definition of the weighted Sobolev
spaces $\maK_a^m(\TmS)$, and by the b-Rellich lemma
\cite{AIN,meaps}, we have for some $\epsilon > 0$ that
\begin{equation*}
 \maD(\Hk) \subset \maK^2_{\epsilon}(\TmS ) \Subset L^2(\TT).
\end{equation*}
(Recall that $\Subset$ means ``compactly embedded''.)  Thus for
$\lambda \notin \RR$, the resolvent of $\Hk$,
\begin{equation*}
  \mathcal{R}_\lambda(\Hk): L^2(\TT) \rightarrow L^2(\TT)
\end{equation*}
is a compact operator.  By standard results of functional analysis, if
a self-adjoint operator has compact resolvent, then $L^2$ has a
complete orthonormal basis consisting of eigenfunctions for this
operator.
\end{proof}

\subsection{Singular functions expansion}
To prove Theorem \ref{theorem2}, we again use results of the
b-calculus, this time primarily from \cite{meaps}. Again, we prove a
more general statement that does not require the Assumption 2. Let
$\nu_0$ be as in Equation \eqref{eq.nu_0}.

\begin{theorem}\label{theorem2.gen}
Assume $\maS=\{p\}$ and $Z:= \rho^2V$ satisfies Assumption 1.  Assume
$\Hk u = \lambda u$ where $u \in \maD(\Hk)$. Then for any $m\in \ZZ_+$
and any $\nu < \nu_0$,
\begin{equation*}
    u \in    \maK^{m}_{\nu}(\TmS ).
\end{equation*}
Further, near each $p \in \maS$, where $Z(p) \neq (l+1/2)^2$ for any
$l \geq 0$, $u$ has a complete (though not unique) expansion of the
form:
\begin{equation}\label{exp}
      u = u_0 + \sum \rho^{\gamma} g_\gamma, \quad \gamma \in
      \mathcal{I}_{Z(p)},\ \Re(\gamma) > - 1/2,
\end{equation}
where the formula for $\mathcal{I}_{Z(p)}$ is given in Equation
\eqref{index} below, $u_0$ is smooth up to $\rho=0$ in polar
coordinates and vanishes to all orders there, hence is in fact smooth
on $\TT$ and vanishes to all orders at $p$, and the coefficient
functions $g_\gamma$ are smooth functions on $S^2$.  Under the
additional Assumption 2, when $Z(p)>-1/4$, the first coefficient
$g_{\eta - 1/2}$ is a constant function.
\end{theorem}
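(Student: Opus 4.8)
The plan is to recast the eigenvalue equation as a statement about the null space of an elliptic b-operator and then apply the b-calculus machinery of \cite{meaps}, using Theorem \ref{theorem1.gen} for the a priori decay. First I would note that $\Hk u = \lambda u$ on $\TmS$ is the same as $P_{\vt k,\lambda} u = 0$, where $P_{\vt k,\lambda} = -\rho^2(\Hk - \lambda)$ is the elliptic b-differential operator of Equation \eqref{Hkinpolar}. By Theorem \ref{theorem1.gen} we already know $u \in \maD(\Hk) \subset \maK^2_\nu(\TmS)$ for all $\nu < \nu_0$, so $u$ lies in a weighted b-Sobolev space. Since $P_{\vt k,\lambda}$ is an elliptic b-operator of order two that annihilates $u$, elliptic b-regularity at fixed weight (which uses only interior and boundary ellipticity, hence imposes no new restriction on $\nu$) bootstraps the b-Sobolev order and yields $u \in \maK^m_\nu(\TmS)$ for every $m \in \ZZ_+$ and every $\nu < \nu_0$; this is the first assertion.

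For the asymptotic expansion I would invoke the structure theorem for the null space of an elliptic b-operator \cite{meaps}: such a solution, lying in a weighted b-Sobolev space, is polyhomogeneous conormal at the blown-up point $p$, with exponents in the index set $\maI_{Z(p)}$ generated from $\operatorname{Spec}_b(P_p)$ --- computed in Lemmas \ref{lemma.ind.fam} and \ref{lemma.Spec_b} --- by addition of nonnegative integers. The integer shifts occur because $P_{\vt k,\lambda}$ differs from its indicial operator $\hat P_p(\rho\partial_\rho) = (\rho\partial_\rho)^2 + \rho\partial_\rho + \Delta_{S^2} - Z(p)$ only by $(Z(p) - Z) - \rho B_{\vt k,\lambda}$, a b-operator with coefficients vanishing at $\rho = 0$. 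The a priori membership $u \in \maK^m_\nu$, $\nu < \nu_0$, from the first step forces $\Re(\gamma) > -1/2$; the non-degeneracy hypothesis on $Z(p)$ excludes double indicial roots of $\hat P_p$, which (together with the explicit form of $\maI_{Z(p)}$ in Equation \eqref{index}) is what permits writing the expansion as $u = u_0 + \sum_\gamma \rho^\gamma g_\gamma$ with $g_\gamma \in \CI(S^2)$ and no logarithmic factors. Here $u_0$ is the conormal remainder, lying in $\bigcap_N \maK^m_N(\TmS)$ and chosen --- non-uniquely, by Borel summation of the tail of the series --- to be smooth in $(\rho,\theta)$ up to $\rho = 0$ and to vanish to infinite order there. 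That such a $u_0$ is then smooth across $p$ on $\TT$ is routine: a function smooth in polar coordinates and vanishing to all orders at $\rho = 0$ has every Cartesian derivative extending continuously by zero.

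For the final claim I would isolate the leading coefficient. Under Assumption 2 one has $\eta^2 = 1/4 + Z(p)$ with $\eta$ real and $\beta_{0,p} = \eta - 1/2 > -1/2$, and by Theorem \ref{theorem1.gen} (where now $\maD(\Hk) = \maK^2_2(\TmS) + \CC\chi\rho^{\eta - 1/2}$) together with the expansion just obtained, the expansion of $u$ near $p$ begins with the power $\rho^{\eta - 1/2} = \rho^{\beta_{0,p}}$, say $u = \chi\rho^{\eta - 1/2} g_{\eta - 1/2} + (\text{strictly higher order})$. Substituting into $P_{\vt k,\lambda} u = 0$ and reading off the coefficient of $\rho^{\eta - 1/2}$ --- to which only $\hat P_p(\rho\partial_\rho)$ contributes, the rest of $P_{\vt k,\lambda}$ being $O(\rho)$ --- gives $\hat P_p(\eta - 1/2)\, g_{\eta - 1/2} = 0$. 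A direct computation, using $\eta^2 = 1/4 + Z(p)$, yields $\hat P_p(\eta - 1/2) = (\eta - 1/2)^2 + (\eta - 1/2) + \Delta_{S^2} - Z(p) = \Delta_{S^2}$, whose kernel on $L^2(S^2)$ is exactly the constant functions; hence $g_{\eta - 1/2}$ is constant.

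The main obstacle is the second step: extracting from the b-calculus the precise index set $\maI_{Z(p)}$ and verifying the absence of logarithmic terms. This requires tracking how the lower-order part $(Z(p) - Z) - \rho B_{\vt k,\lambda}$ of $P_{\vt k,\lambda}$ shifts exponents by positive integers and checking whether any resulting exponent collides with a further indicial root of $\hat P_p$; the non-degeneracy hypothesis on $Z(p)$ removes the collisions caused by multiplicity, and the remaining bookkeeping is what is recorded in Equation \eqref{index}. Everything else --- the elliptic b-regularity of the first step, the polyhomogeneity of b-harmonic functions, the smoothness of $u_0$ across $p$, and the leading-coefficient computation --- is either standard or a short calculation.
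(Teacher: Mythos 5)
Your proof is correct and follows the paper's overall strategy: pass to the elliptic b-operator $P_{\vt k,\lambda}$, use the a priori weight from Theorem~\ref{theorem1.gen}, bootstrap regularity, and invoke the polyhomogeneity theorem from the b-calculus with index set generated by $\mbox{Spec}_b(P_p)$. There are, however, two places where you argue differently and more directly. First, for the regularity bootstrap you use fixed-weight b-elliptic regularity (which makes no demands on the weight $\nu$), whereas the paper instead invokes the Fredholm property of $\Hk-\lambda$ between weighted Sobolev spaces and has to arrange that the relevant weights avoid $\mbox{Spec}_b(H_{\vt k})+3/2$; your route is cleaner since regularity, unlike Fredholmness, does not require avoiding indicial weights. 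Second, for the constancy of $g_{\eta-1/2}$ the paper simply refers back to the explicit singular functions in the domain from the proof of Theorem~\ref{theorem1}, while you give a self-contained indicial-equation computation: $\hat P_p(\eta - 1/2) = \Delta_{S^2}$ (your algebra checks out), whose kernel on the compact connected $S^2$ is exactly $\CC$. Both methods are sound; your version is more explicit and does not rely on unpacking the domain description. One small caution: the theorem's stated exclusion $Z(p)\neq(l+1/2)^2$ is a sign typo for $Z(p)\neq -(l+1/2)^2$ (as the paper's own proof makes clear), and you correctly work with the latter; also, like the paper you implicitly treat the possibility of accidental integer collisions among distinct indicial roots of $\hat P_p$ (which could in principle reintroduce logarithms beyond the double-root case) only by deferring to Equation~\eqref{index}, but that is a gap shared with, not introduced beyond, the original argument.
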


\begin{proof}
Any eigenfunction $u$ of $\Hk$ must be in its domain, thus in
$\maK^2_\nu(\TmS)$ for all $\nu< \nu_0$.  Our first goal is to improve
the degree of smoothness from $2$ to $m$ for $m \in \mathbb{N}$.  To
do this, we use the fact that any $\lambda \in \CC$, the operator
$H_{\vt{k}} -\lambda$ is Fredholm as a map between weighted Sobolev
spaces:
\begin{equation}\label{Fredholm}
  \Hk - \lambda: \maK^m_a(\TT \smallsetminus\maS) \rightarrow
  \maK^{m-2}_{a-2}(\TT \smallsetminus\maS)
\end{equation}
for all $a \in \RR$ such that $a \notin \cup_{l \in \ZZ_{\geq 0 }}
\{\beta_{l,p} + \frac{3}{2}, \alpha_{l,p}+\frac{3}{2}\}=
\mbox{Spec}_b(H_{\vt{k}}) + 3/2$.  By general b-calculus theory, the
set $\mbox{Spec}_b(H_{\vt{k}})$ is a discrete subset of $\CC$ and
furthermore, for any $\gamma_0$ and $\eta$, it has only a finite
number of elements in the strip $\gamma_0 \leq \Re(z) \leq \eta$.
Thus for any $\nu_0$, there exist arbitrarily close $\nu<\nu_0$ such
that the condition on $a$ is satisfied for $a=\nu + 2s$, where $s \in
\mathbb{N}$.  Together with standard bootstrapping arguments, this
allow us to improve the regularity of eigenfunctions of $\Hk$ in terms
of weighted Sobolev spaces to $\maK^m_{\nu}(\TmS)$ for all $m$ and
$\nu < \nu_0$.

Next, to obtain the expansion in \ref{theorem2} we use a general
result in the b-calculus literature, see {\em e.g.} \cite{meaps}, that
implies that any $u \in \cup_{m,a}\maK^m_a(\TT \smallsetminus\maS)$
which is an eigenfunction for $\Hk$ in some weighted $L^2(\TT)$ in
fact has much stronger regularity: it polyhomogeneous in $\rho$ near
each $p \in \maS$ with index set $\mathcal{I}_{Z(p)}$.  If $Z(p)
\notin \{-(l+\frac{1}{2})^2\}_{l=0}^\infty$ (for instance if
$Z(p)>-1/4$), then the index set is simply a set of complex numbers
that is finite in any strip $\gamma_0 \leq \Re(z) \leq \eta$:
\begin{equation}\label{index}
  \mathcal{I}_{Z(p)} = \bigcup_{n=0}^\infty \{\beta_{l,p} + n,
  \alpha_{l,p}+n \}_{l \in \ZZ_{\geq 0 }}.
\end{equation}
This means that around each $p \in \maS$, there exist smooth
coefficient functions $g_{\gamma} \in \maC^{\infty}(S^2)$ such that
for all $N$,
\begin{equation}\label{eq.asympt.exp}
	u_N := u - \sum \rho^{\gamma} g_\gamma \in \rho^N
        \maC^N(\overline{\TmS}), \quad \gamma \in \mathcal{I}_{Z(p)}
        \Re(\gamma) \leq N
\end{equation}
that is, $u_N$ is $N$ times continuously differentiable up to $\rho
=0$ in polar coordinates at $p$ and in these coordinates, vanishes
with all of its derivatives up to order $\rho^N$ there.  We take the
limit in the topology of the smallest $\maK^m_a$ to which $u$ belongs.

If $Z(p) \neq -(l+\frac{1}{2})^2$ for any $l \geq 0$ and if $u \in
L^2(\TT)$, when we let $N \rightarrow \infty$, we find that
\begin{equation*}
  u = u_0 + \sum \rho^{\gamma} g_\gamma, \quad
  \gamma \in \mathcal{I},  \Re(\gamma) > -1/2,
\end{equation*}
where $u_0$ is smooth up to $\rho=0$ in polar coordinates and vanishes
to all orders there, hence is in fact smooth on $\TT$ and vanishes to
all orders at $p$.

Since the set of $\gamma$ that appear in this expansion is discrete in
$\RR$, we get that the smallest exponent that appears will in fact be
somewhat better than $-3/2$.  This first exponent will be $\nu_0$ if
$Z(p) \leq 3/4$.  If $Z(p)>3/4$, then eigenfunctions will in fact be
in a space with higher weight than $\maK^2_2(\TmS)$: the weight will
be $1+\sqrt{Z(p) + 1/4}$.

Finally, we can note that the terms of the expansion of an
eigenfunction for $\Hk$ that are not in $\maK^2_2(\TmS)$ will be of
the forms determined in the proof of Theorem \ref{theorem1} (see, eg
\cite{HHM} for a proof).  So, for instance, if $Z(p)\geq -1/4$, the
leading term of any eigenfunction $u$ will be constant in $S^2$, and
$u$ minus its leading term will vanish at $p$. Further, if $Z(p)\geq
-1/4$, then the exponents $\gamma$ will all be real numbers.  Thus we
obtain:
\begin{equation}\label{exp2}
  u = u_0 + \rho^{\eta - \frac{1}{2}} g_{\eta - 1/2} + \sum
  \rho^{\gamma} g_\gamma, \quad \gamma \in \mathcal{I}_{Z(p)} \gamma >
  \eta - 1/2,
  \end{equation}
where $g_{\eta - 1/2}$ a constant.  This completes the proof of
Theorem \ref{theorem2}.
\end{proof}

\section{Invertibility\label{sec2}}

In this section we prove the boundedness and invertibility result in
Theorem \ref{theorem1.5}. From now on, we require both Assumptions 1
and 2 to be satisfied by our potential $V$.

\subsection{Preliminary results}
We begin with a few standard results lemma.

\begin{lemma}\label{lemma.Q2}
Let $m, a \in \RR$. Then
\begin{enumerate}[(i)]
\item For any $f \in \maC^{\infty}(\overline{\TmS})$, the
  multiplication map
\begin{equation*}
  \maK^m_a(\TmS) \ni u \to f u \in
  \maK^m_a(\TmS) = \rho^{a-3/2} H^m_b(\TmS)
\end{equation*}
is continuous for all $m \in \ZZ_+$ and all $a \in \RR$.
\item The operator $\Hk -\lambda$ maps $\maK_{a+1}^{m+1} (\TmS )$
to $\maK_{a-1}^{m-1} (\TmS )$ continuously.
\item The operator $\rho^{-1}B_{\vt k, \lambda}$ maps
  $\maK_{a+1}^{m+1} (\TmS )$ to $\maK_{a}^{m}
  (\TmS )$ continuously.
\item $\rho^{-1}B_{\vt k, \lambda} : \maK_{a+1}^{m+1} (\TmS) \to
  \maK_{a-1}^{m-1} (\TmS)$ is compact.
\end{enumerate}
\end{lemma}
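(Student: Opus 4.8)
The plan is to prove the four parts of Lemma~\ref{lemma.Q2} in order, deriving each successive statement from the previous ones together with the definition $\maK^m_a(\TmS) = \rho^{a-3/2}H^m_b(\TmS)$ and the structure of the operators $\Hk - \lambda$ and $B_{\vt k, \lambda}$ recorded in Equation~\eqref{Hkinpolar}.

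For part~(i): the point is that multiplication by $f \in \maC^\infty(\overline{\TmS})$ is continuous on $H^m_b(\TmS)$, because the b-vector fields (locally spanned by $\rho\partial_\rho$ and the spherical derivatives) act on $\maC^\infty(\overline{\TmS})$ with values again in $\maC^\infty(\overline{\TmS})$, so by the Leibniz rule $f\cdot(-)$ commutes with the b-Sobolev scale up to lower-order terms with bounded coefficients. Conjugating by the weight $\rho^{a-3/2}$ does not change this since $f$ commutes with $\rho$. This is the standard fact that $\maC^\infty(\overline{\TmS})$ multipliers preserve weighted b-Sobolev spaces, and I would just cite \cite{meaps} for it rather than spell out the induction on $m$.

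For parts~(ii) and~(iii): I would read off from Equation~\eqref{Hkinpolar} that $-\rho^2(\Hk - \lambda) = P_{\vt k,\lambda} = (\rho\partial_\rho)^2 + \rho\partial_\rho + \Delta_{S^2} - Z - \rho B_{\vt k,\lambda}$ is an elliptic second-order b-differential operator with coefficients in $\maC^\infty(\overline{\TmS})$ (here $Z = \rho^2 V$, which is in $\maC^\infty(\overline{\TmS})$ by Assumption~1), hence maps $H^{m+1}_b \to H^{m-1}_b$ continuously, and therefore maps $\maK^{m+1}_{a+1} = \rho^{a-1/2}H^{m+1}_b \to \rho^{a-1/2}H^{m-1}_b = \maK^{m-1}_{a-1}$ continuously; peeling off the leading factor $\rho^2$ gives exactly (ii). For (iii), by definition $\rho^{-1}B_{\vt k,\lambda} = \sum_j(-2i\partial_j + k_j^2) - \lambda$; in b-terms $\partial_j = \rho^{-1}(\text{smooth combination of }\rho\partial_\rho\text{ and spherical }\partial\text{'s})$, so $B_{\vt k,\lambda}$ itself is a first-order b-operator and $\rho^{-1}B_{\vt k,\lambda}$ is $\rho^{-1}$ times a first-order b-operator; as a map $H^{m+1}_b \to H^m_b$ a first-order b-operator is continuous, and after the weight shift and the extra $\rho^{-1}$ we land in $\rho^{(a-1/2)-1}H^m_b = \maK^m_a$, which is (iii). (One checks that the reduction from $\maK^{m+1}_{a+1}$ to $\maK^m_a$ is consistent: $\maK^{m+1}_{a+1} = \rho^{a-1/2}H^{m+1}_b$ and $\maK^m_a = \rho^{a-3/2}H^m_b = \rho^{-1}\rho^{a-1/2}H^m_b$, so (iii) says precisely that $B_{\vt k,\lambda}: \rho^{a-1/2}H^{m+1}_b \to \rho^{a-1/2}H^m_b$, i.e. $B_{\vt k,\lambda}$ drops one b-derivative with no weight loss, which is the statement that $B_{\vt k,\lambda}$ is a first-order b-operator.)

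For part~(iv), the one genuinely substantive step: factor $\rho^{-1}B_{\vt k,\lambda} : \maK^{m+1}_{a+1} \to \maK^m_a \hookrightarrow \maK^{m-1}_{a-1}$, where the first arrow is continuous by~(iii) and the second is the inclusion $\maK^m_a \subset \maK^{m-1}_{a-1}$. This inclusion gains both a derivative and a weight, so by the b-Rellich compactness lemma (\cite{AIN, meaps} --- the same embedding $\maK^m_a \Subset \maK^{m-1}_{a'}$ for $a' < a$ already invoked in the proof of Corollary~\ref{corollary2}) it is compact; composing a compact operator with a bounded one gives a compact operator, so $\rho^{-1}B_{\vt k,\lambda}: \maK^{m+1}_{a+1} \to \maK^{m-1}_{a-1}$ is compact. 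The main obstacle is purely bookkeeping: making sure the weight and regularity indices line up so that (iii) delivers a map into $\maK^m_a$ (gaining both $+1$ in smoothness and, crucially, $+1$ in weight relative to the target $\maK^{m-1}_{a-1}$ of~(ii)), which is exactly what makes the Rellich embedding applicable; there is no hard analysis beyond citing the standard b-calculus mapping and compactness results.
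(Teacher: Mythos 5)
Your proof is correct and spells out precisely the b-calculus mapping facts that the paper dispatches in one line by citing \cite{HunsickerNistorSofo, AIN, meaps, Lesch}; in substance it is the same approach, just made explicit. The structure of the argument — reduce (ii) and (iii) to the statement that $P_{\vt k,\lambda}=-\rho^2(\Hk-\lambda)$ is a second-order b-operator and $B_{\vt k,\lambda}$ a first-order b-operator with coefficients smooth on $\overline{\TmS}$, then use the identification $\maK^m_a(\TmS)=\rho^{a-3/2}H^m_b(\TmS)$ to track the weights, and finally obtain (iv) from (iii) plus the compact inclusion $\maK^m_a\Subset\maK^{m-1}_{a-1}$ — is the correct and intended reduction.

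One bookkeeping slip worth fixing: in your treatment of (ii) you write that $P_{\vt k,\lambda}$ maps $\rho^{a-1/2}H^{m+1}_b \to \rho^{a-1/2}H^{m-1}_b = \maK^{m-1}_{a-1}$, but $\rho^{a-1/2}H^{m-1}_b = \maK^{m-1}_{a+1}$, not $\maK^{m-1}_{a-1}$. A b-operator preserves the weight, so $P_{\vt k,\lambda}:\maK^{m+1}_{a+1}\to\maK^{m-1}_{a+1}$, and it is only after multiplying by $\rho^{-2}$ (your ``peeling off the factor $\rho^2$'') that you land in $\maK^{m-1}_{a-1}=\rho^{a-5/2}H^{m-1}_b$. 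Your concluding clause gives the right target; the intermediate equation is mislabeled. Everything else, in particular the weight count in (iii) ($\rho^{(a-1/2)-1}H^m_b=\maK^m_a$) and the compactness factorization in (iv), checks out.
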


\begin{proof}
The simple proofs of these results are the same as that of the
analogous results in \cite{HunsickerNistorSofo}, and follow directly
from properties of b-operators \cite{AIN, meaps, Lesch}.
\end{proof}
We also need the following standard lemma (again, see
\cite{HunsickerNistorSofo} for its proof).
\begin{lemma} \label{lemma.Green}
Let $a \in \RR$ be arbitrary and assume that $u \in
\maK_{1+a}^2(\TmS)$ and that $v \in \maK_{1-a}^2(\TmS)$. Then $(\Delta
u , v) + (\nabla u, \nabla v) = 0$.
\end{lemma}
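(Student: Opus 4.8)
The plan is to prove the identity by applying the classical Green (integration-by-parts) formula on the truncated space $\TT\smallsetminus B_\epsilon(p)$, where $B_\epsilon(p):=\{\rho<\epsilon\}$, and then to let $\epsilon\to 0$ along a carefully chosen sequence so that the resulting boundary term disappears. Since $\TT$ is closed, the boundary of $\TT\smallsetminus B_\epsilon(p)$ is only the sphere $\{\rho=\epsilon\}$, and in the polar coordinates around $p$ in which $\dvol=\rho^2\,d\rho\,d\omega$ Green's first identity reads
\begin{equation*}
  \int_{\TT\smallsetminus B_\epsilon(p)} (\Delta u)\,\bar v\,\dvol
  + \int_{\TT\smallsetminus B_\epsilon(p)} \nabla u\cdot\overline{\nabla v}\,\dvol
  = -\,\epsilon^2\!\int_{S^2} (\pa_\rho u)(\epsilon\omega)\,\overline{v(\epsilon\omega)}\,d\omega .
\end{equation*}
This holds for each fixed $\epsilon>0$ because, on the smooth compact manifold-with-boundary $\TT\smallsetminus B_\epsilon(p)$ the weights $\rho^{\pm 1}$ are bounded, so $u$ and $v$ restrict to genuine $H^2$ functions there, and Green's formula for $H^2$ functions on a smooth compact domain follows from the case of smooth functions by density.

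First I would verify that the two bulk integrands are globally integrable on $\TT$. Writing them as products of complementary-weight factors,
\begin{equation*}
  \nabla u\cdot\overline{\nabla v} = (\rho^{-a}\nabla u)\cdot(\rho^{a}\overline{\nabla v}),
  \qquad (\Delta u)\,\bar v = (\rho^{1-a}\Delta u)(\rho^{a-1}\bar v),
\end{equation*}
each factor lies in $L^2(\TT)$ by the definitions of $\maK^2_{1+a}(\TmS)$ and $\maK^2_{1-a}(\TmS)$ in \eqref{eq.def.ws}; by Cauchy--Schwarz both integrands are therefore in $L^1(\TT)$. Consequently, by dominated convergence, the left-hand side of the displayed identity converges as $\epsilon\to 0$ to $(\Delta u,v)+(\nabla u,\nabla v)$, and it only remains to exhibit a sequence $\epsilon_j\to 0$ along which the boundary term vanishes.

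The main point is the control of that boundary term. Set $f(\epsilon):=\epsilon^2\int_{S^2}|\pa_\rho u(\epsilon\omega)|\,|v(\epsilon\omega)|\,d\omega$, which dominates the absolute value of the boundary integral. Using $\rho^2\,d\rho\,d\omega=\rho^{-1}\dvol$ near $p$ and then Cauchy--Schwarz,
\begin{equation*}
  \int_0^{\epsilon_0} f(\epsilon)\,\frac{d\epsilon}{\epsilon}
  = \int_{B_{\epsilon_0}(p)} \rho^{-1}\,|\pa_\rho u|\,|v|\,\dvol
  \le \int_{B_{\epsilon_0}(p)} (\rho^{-a}|\nabla u|)\,(\rho^{a-1}|v|)\,\dvol < \infty,
\end{equation*}
since $\rho^{-a}\nabla u\in L^2(\TT)$ and $\rho^{a-1}v\in L^2(\TT)$. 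Finiteness of $\int_0^{\epsilon_0} f(\epsilon)\,d\epsilon/\epsilon$ forces $\liminf_{\epsilon\to 0}f(\epsilon)=0$, so some sequence $\epsilon_j\downarrow 0$ has $f(\epsilon_j)\to 0$; hence the boundary term vanishes along $\epsilon_j$. Combining this with the previous paragraph gives $(\Delta u,v)+(\nabla u,\nabla v)=0$.

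I expect the only real obstacle to be precisely this last step — the weighted Cauchy--Schwarz bookkeeping, keeping careful track of the $\rho^{-1}$ coming from the volume form, and the (elementary) passage from ``integrable against $d\epsilon/\epsilon$'' to ``vanishing along a subsequence''. Everything else — the $\epsilon$-truncated Green identity, the absolute convergence of the bulk integrals, the dominated-convergence passage to the limit — is routine. If $\maS$ contains several points one simply excises a small ball around each singularity and runs the same estimate at each boundary sphere.
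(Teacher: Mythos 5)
Your proof is correct and complete, and it is self-contained; the paper itself does not give a proof of this lemma but defers to \cite{HunsickerNistorSofo}. The bookkeeping checks out: from $u\in\maK^2_{1+a}$ you have $\rho^{-a}\nabla u$ and $\rho^{1-a}\Delta u$ in $L^2(\TT)$, from $v\in\maK^2_{1-a}$ you have $\rho^{a-1}v$ and $\rho^a\nabla v$ in $L^2(\TT)$, so both bulk integrands factor as products of $L^2$ functions and are absolutely integrable; the boundary estimate $\int_0^{\epsilon_0}f(\epsilon)\,d\epsilon/\epsilon = \int_{B_{\epsilon_0}}\rho^{-1}|\pa_\rho u|\,|v|\,\dvol<\infty$ is exactly right, and the passage from integrability against $d\epsilon/\epsilon$ to $\liminf f=0$ is elementary and valid. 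One small remark: the truncated Green identity holds for every (not just almost every) $\epsilon>0$ because $u,v$ are $H^2$ on a neighborhood of each sphere $\{\rho=\epsilon\}$ away from $p$, so the traces are in fact $H^{3/2}$ and $H^{1/2}$ and the boundary pairing is classical.

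A more commonly seen alternative route, and likely the one in the cited reference, is via density: $\CIc(\TmS)$ is dense in $\maK^m_a(\TmS)=\rho^{a-3/2}H^m_b(\TmS)$ for a compact manifold with boundary; the bilinear form $B(u,v):=(\Delta u,v)+(\nabla u,\nabla v)$ is jointly continuous on $\maK^2_{1+a}\times\maK^2_{1-a}$ by precisely the Cauchy--Schwarz splittings you wrote down; and $B$ vanishes on $\CIc\times\CIc$ by the classical Green formula without boundary term, hence vanishes identically by continuity. The density approach is shorter and avoids any choice of subsequence, but it requires knowing the density fact for weighted b-Sobolev spaces. Your truncation argument trades that input for a direct, elementary control of the boundary flux; both buy the same conclusion, and yours has the advantage of being entirely self-contained.
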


We shall also need the following consequence of the general
properties of the b-calculus \cite{meaps, Schulze98}.

\begin{proposition}\label{prop.Fredholm}
Let us fix $\lambda \in \CC$ and $a \notin \{\tilde \beta_{l,p} ,
\tilde \alpha_{l,p}\} = \cup_{l \in \ZZ_{\geq 0 }} \{\beta_{l,p} +
\frac{1}{2}, \alpha_{l,p}+\frac{1}{2}\}$. Let $N$ be the number of
elements in the set $\{\tilde \beta_{l,p} , \tilde \alpha_{l,p}\}$
that are between $0$ and $a$, counted with multiplicity.  Then the
operator $H_{\vt{k}} -\lambda$ is Fredholm as a map between weighted
Sobolev spaces:
\begin{equation*}
  \Hk - \lambda: \maK^{m+1}_{a+1}(\TT \smallsetminus\maS) \rightarrow
  \maK^{m-1}_{a-1}(\TT \smallsetminus\maS)
\end{equation*}
and has index $-N$ if $a > 0$, respectively $N$ if $a < 0$.
\end{proposition}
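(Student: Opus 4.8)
The plan is to reduce the statement to two standard facts about elliptic b-operators — the Fredholm criterion in terms of the boundary spectrum, and the relative index theorem — after conjugating $\Hk-\lambda$ into a genuine b-operator on weight-zero b-Sobolev spaces, and then to pin down the reference value of the index by an $L^2(\TT)$-duality argument.

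I would first pass from $\Hk-\lambda$ to the elliptic b-operator $P_{\vt k,\lambda}=-\rho^2(\Hk-\lambda)$ of Lemma \ref{lemma.ind.fam}. Multiplication by $\rho^{-2}$ is an isomorphism $\maK^{m-1}_{a+1}(\TmS)\to\maK^{m-1}_{a-1}(\TmS)$, and $\maK^{m+1}_{a+1}(\TmS)=\rho^{a-1/2}H^{m+1}_b(\TmS)$ by Lemma \ref{lemma.Q2}(i); hence $\Hk-\lambda:\maK^{m+1}_{a+1}\to\maK^{m-1}_{a-1}$ has the same Fredholm property and index as $P_{\vt k,\lambda}:\rho^{a-1/2}H^{m+1}_b\to\rho^{a-1/2}H^{m-1}_b$, i.e. as the conjugated b-operator $\rho^{1/2-a}\,P_{\vt k,\lambda}\,\rho^{a-1/2}:H^{m+1}_b\to H^{m-1}_b$. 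Conjugation by a real power of $\rho$ translates the indicial family, so by Lemma \ref{lemma.ind.fam} this operator has indicial roots $\{\tilde\alpha_{l,p}-a,\ \tilde\beta_{l,p}-a\}_{l\geq0}$, where $\tilde\beta_{l,p}=\beta_{l,p}+\tfrac12=\sqrt{(l+1/2)^2+Z(p)}$ and $\tilde\alpha_{l,p}=-\tilde\beta_{l,p}$, the root attached to a given $l$ having multiplicity $2l+1$ (the dimension of the $-l(l+1)$-eigenspace of $\Delta_{S^2}$). By the general theory of elliptic b-operators (\cite{meaps, Schulze98}), such an operator between weight-zero b-Sobolev spaces is Fredholm precisely when it has no indicial root of vanishing real part; under Assumption 2 every $\tilde\beta_{l,p}$ is real and positive, so all the roots are real and this criterion reads $a\notin\{\tilde\alpha_{l,p},\tilde\beta_{l,p}\}$, our hypothesis. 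This proves the Fredholm claim. Moreover, for a fixed admissible $a$ the index is independent of $\lambda$ (being a locally constant $\ZZ$-valued function of $\lambda$ on the connected set $\CC$, as the Fredholm condition, the principal symbol, and the indicial roots are all $\lambda$-independent), and, by elliptic regularity, independent of $m$; call this common value $j(a)$.

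Next I would establish the symmetry $j(a)=-j(-a)$. The $L^2(\TT)$ pairing identifies the dual of $\maK^{m'}_{a'}(\TmS)$ with $\maK^{-m'}_{-a'}(\TmS)$, and $\Hk$ is formally self-adjoint because $V$ is real; hence the Banach adjoint of $\Hk-\lambda:\maK^{m+1}_{a+1}\to\maK^{m-1}_{a-1}$ is identified with $\Hk-\bar\lambda:\maK^{1-m}_{1-a}\to\maK^{-1-m}_{-1-a}$, which by the previous paragraph has index $j(-a)$; and since the index of a Banach adjoint is minus that of the operator, $j(-a)=-j(a)$. In particular $j(0)=0$, and $0$ is admissible under Assumption 2 because $\tilde\beta_{l,p}\ne0$ for every $l$. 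Finally, the relative index theorem for b-operators (\cite{meaps}) says that as $a$ increases through an admissible range, $j(a)$ drops by the total multiplicity of the indicial roots crossed. Since $\tilde\alpha_{l,p}=-\tilde\beta_{l,p}<0$, in passing from weight $0$ to a weight $a>0$ the roots crossed are exactly the $\tilde\beta_{l,p}$ with $0<\tilde\beta_{l,p}<a$, each with multiplicity $2l+1$, and their total count is precisely the integer $N$ of the statement. Hence $j(a)=-N$ for $a>0$, and $j(a)=-j(-a)=N$ for $a<0$ by the symmetry of $\{\tilde\alpha_{l,p},\tilde\beta_{l,p}\}$ about the origin.

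The delicate point is this last step: one must track the $\tfrac12$-shift relating the $\beta_{l,p}$ to the $\tilde\beta_{l,p}$, apply the relative index theorem with the orientation under which the index \emph{decreases} as the weight increases, and verify that ``multiplicity of an indicial root'' is the spherical-harmonic count $2l+1$ implicit in $N$. The overall sign can be checked independently: for $Z(p)\in(-1/4,3/4)$ and $a=1$, Theorem \ref{theorem1.gen} gives $\maD(\Hk)=\maK^2_2(\TmS)+\CC\chi\rho^{\eta-1/2}$ with $\chi\rho^{\eta-1/2}\notin\maK^2_2(\TmS)$, so for $\lambda\notin\RR$ the invertible operator $\Hk-\lambda:\maD(\Hk)\to L^2(\TT)=\maK^0_0(\TmS)$ restricts on $\maK^2_2$ to an injective map with one-dimensional cokernel, of index $-1=-N$ since $N=1$ here.
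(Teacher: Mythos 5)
Your proof is correct and follows essentially the same route as the paper: reduce $\Hk - \lambda$ to a b-differential operator by conjugating with suitable powers of $\rho$, read off the shifted indicial roots, invoke the Fredholm characterization for elliptic b-operators, and then use the relative index theorem to compute the index. The one place you go beyond the paper's terse argument is in pinning down the base value $j(0)=0$: the paper simply states that $m_a - m_b$ equals the number of indicial roots crossed and declares the proof complete, implicitly leaving the reader to supply a base case, whereas you supply it cleanly via the $L^2$-duality identification $D_a^{*} = D_{-a}$ and the formal self-adjointness of $\Hk$ (the same duality that appears explicitly in the paper's proof of Theorem \ref{theorem1.5}). You also make explicit the multiplicity $2l+1$ of the indicial root attached to each $l$, the orientation of the relative index theorem, and a concrete consistency check for $Z(p) \in (-1/4,3/4)$, $a=1$ — all points the paper glosses over but which are needed to land on the stated values $-N$ and $N$ with the correct sign.
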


\begin{proof}
We consider again the operator $P_{0, 0} = \rho (\Hk - \lambda) \rho$,
which is a b-differential operator. It is unitarily equivalent to
$\rho^{1/2} P_{0, 0} \rho^{-1/2}$ acting on b-Sobolev spaces (see the
proof of Theorem \ref{theorem1.gen}), which has $\{\tilde \beta_{l,p}
, \tilde \alpha_{l,p}\}$ as a b-spectrum. The result then follows from
the characterization of Fredholm b-differential operators \cite{meaps,
  KMR, LMN1}.

It remains to determine the index of $\Hk - \lambda$. Let $m_a$ be the
index of the operator for a fixed value of $a$. Then it is a standard
result that $m_a - m_b$ is given by the number of singular functions
with exponent between $a$ and $b$ \cite{KMR, meaps, Schulze98,
  MoroianuNistor}. This is enough to complete the proof.
\end{proof}

See \cite{HLNU3} for an extension of this result and for more details.

Now recall the Hardy inequality, which states that
\begin{equation}\label{eq.Hardy}
  \int_{\RR^N} r^{-2} |u(x)|^2 dx \le (2/(N-2))^2 \int_{\RR^N} |\nabla
  u(x)|^2 dx
\end{equation}
for any $u \in H^1(\RR^N)$, $N \ge 3$, where $r$ is the distance to
the origin \cite{GGM}. We can use this to prove the following
important lemma. To simplify notation, after the lemma statement, we
shall let $(u, v) := (u, v)_{L^2(\TT)}$.

\begin{lemma}\label{lemma.bounded}
There are constants $C, \gamma > 0$ such that for any $u \in
\maK_1^1(\TmS)$,
\begin{equation*}
  (\Hk u, u)_{L^2(\TT)} + C (u, u)_{L^2(\TT)} \ge \gamma (u,
  u)_{\maK_1^1(\TmS)} := \gamma \int_{\TT} \big (\rho^{-2} |u(x)|^2 +
  |\nabla u(x)|^2 \big ) dx.
\end{equation*}
\end{lemma}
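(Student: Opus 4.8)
The plan is to estimate $(\Hk u, u)$ from below by separating the contributions of the singular part of the potential near $p$ from the smooth remainder. Write $\Hk = -\sum_j(\partial_j + ik_j)^2 + V$ and expand the quadratic form: for $u \in \maK_1^1(\TmS)$ (which is the natural form domain, dense in $L^2(\TT)$), integration by parts — justified via Lemma \ref{lemma.Green} applied to a density argument, or directly since $u$ and its first derivatives are controlled near $p$ — gives
\begin{equation*}
  (\Hk u, u) = \sum_{j=1}^3 \|(\partial_j + ik_j)u\|^2 + \int_\TT V|u|^2
  = \|\nabla u\|^2 + (\text{lower order in } \vt k) + \int_\TT V|u|^2.
\end{equation*}
The $\vt k$-dependent cross terms are of the form $2\re\sum_j (ik_j \partial_j u, u)$ plus $|\vt k|^2\|u\|^2$, and the cross term is bounded by $\epsilon\|\nabla u\|^2 + C_\epsilon\|u\|^2$ by Cauchy--Schwarz. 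So modulo a term $C(u,u)$ that we are allowed on the left, it suffices to bound $\|\nabla u\|^2 + \int_\TT V|u|^2$ below by $\gamma(\|\rho^{-1}u\|^2 + \|\nabla u\|^2)$ up to a further multiple of $\|u\|^2$.

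Next I would localize. Fix a small ball $B_p$ around each singular point on which $\rho(x) = |x-p|$, and a cutoff $\phi$ equal to $1$ near $\maS$ and supported in $\cup B_p$. On the support of $1-\phi$, $\rho$ is bounded below, so $\rho^{-2}$ is bounded and $V$ is bounded (Assumption 1 gives $V = Z/\rho^2$ with $Z$ smooth on $\overline{\TmS}$, hence bounded), and there is nothing to prove there — the contribution is absorbed into $C\|u\|^2$ after an IMS-type localization formula that produces only bounded error terms $|\nabla\phi|^2$. So the whole problem reduces to the model estimate on a single ball $B_p$ with $V = Z/\rho^2$, $Z(p) > -1/4$: namely, for $u$ supported in $B_p$,
\begin{equation*}
  \|\nabla u\|^2 + \int_{B_p} \frac{Z}{\rho^2}|u|^2
  \ge \gamma\Big(\int_{B_p}\rho^{-2}|u|^2 + \|\nabla u\|^2\Big) - C\|u\|^2.
\end{equation*}
Here is where the Hardy inequality \eqref{eq.Hardy} with $N=3$ enters: it gives $\int_{\RR^3}\rho^{-2}|u|^2 \le 4\int_{\RR^3}|\nabla u|^2$. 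Writing $Z = Z(p) + (Z - Z(p))$ with $|Z-Z(p)| \le \omega(\rho)$ small near $p$ (continuity of $Z$ at $p$, Assumption 1), and shrinking $B_p$ so that $\omega < \delta$ on $B_p$, the error $\int (Z-Z(p))\rho^{-2}|u|^2$ is bounded by $\delta\int\rho^{-2}|u|^2 \le 4\delta\|\nabla u\|^2$. Thus
\begin{equation*}
  \|\nabla u\|^2 + \int_{B_p}\frac{Z}{\rho^2}|u|^2
  \ge (1 + 4\min(Z(p),0) - 4\delta)\|\nabla u\|^2
  \quad\text{when } Z(p) < 0,
\end{equation*}
and since $Z(p) > -1/4$ we have $1 + 4Z(p) > 0$, so choosing $\delta$ small makes the coefficient a positive constant $2\gamma_0$; when $Z(p)\ge 0$ the potential term is nonnegative and this is even easier. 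This controls $\gamma_0\|\nabla u\|^2$; to also recover the $\rho^{-2}$ term, split: use half the good coefficient on $\|\nabla u\|^2$ as above, and bound the remaining piece of $\|\nabla u\|^2$ below by $\tfrac14\|\rho^{-1}u\|^2$ directly via Hardy, giving $\gamma(\|\nabla u\|^2 + \|\rho^{-1}u\|^2)$.

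The main obstacle I anticipate is bookkeeping the localization carefully: patching the $B_p$ estimates with the exterior estimate requires an IMS localization identity
\begin{equation*}
  \|\nabla u\|^2 = \sum_i \|\nabla(\phi_i u)\|^2 - \sum_i\||\nabla\phi_i|u\|^2
\end{equation*}
for a partition $\sum\phi_i^2 = 1$ subordinate to $\{B_p\}\cup\{\TT\setminus\maS\}$, and one must check that $\phi_i u$ still lies in the relevant form domain and that the cross terms $\||\nabla\phi_i|u\|^2$ are genuinely bounded multiples of $\|u\|^2$ (they are, since $\nabla\phi_i$ is smooth and compactly supported away from $\maS$). A secondary subtlety is that Hardy's inequality is stated on $\RR^3$ for $H^1(\RR^3)$ functions, so to apply it to $\phi_p u$ supported in $B_p$ one extends by zero — legitimate since $\phi_p u \in H^1$ with compact support not containing... well, it does contain $p$, but that is fine: $H^1(\RR^3)$ functions may be nonzero at a point of measure zero and Hardy still applies. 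Finally, one verifies the inequality first for $u$ in a dense core (smooth, supported away from $\maS$) where all integrations by parts are unproblematic, then extends to all of $\maK_1^1(\TmS)$ by continuity of both sides in the $\maK_1^1$-norm, using Lemma \ref{lemma.Q2}(i) and the fact that $(\Hk u, u) + C\|u\|^2$ is, after the manipulations above, comparable to $\|u\|_{\maK_1^1}^2$ — so the core is dense in $\maK_1^1$ and the estimate passes to the limit.
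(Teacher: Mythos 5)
Your proposal is correct and follows essentially the same route as the paper: localize near each singular point, apply Hardy's inequality (together with Assumption 2 and the continuity of $Z$ at $p$) to control the leading singular part of the quadratic form, absorb the $\vt k$-dependent first-order terms by Cauchy--Schwarz, and pass from a dense core to $\maK_1^1(\TmS)$. The paper's cutoff decomposition via $\phi^{1/2}$, $\psi^{1/2}$ and $(1-\phi^{1/2})$ is exactly an IMS localization for the partition of unity $\phi + \psi + (1-\phi^{1/2})^2 = 1$, so your use of the IMS identity is the same mechanism in different notation.
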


\begin{proof} 
For an operator $T: \maK_1^1(\TmS) \rightarrow \maK_{-1}^{-1}(\TmS)$,
we shall write $T \ge 0$ if $(Tu, u) \ge 0$ for all $u \in
\maK_{1}^1(\TmS)$.  Now let $\phi \ge 0$ be a smooth function
$\mathbb{T}$ that is equal to $1$ in a small neighborhood of $\maS$
and has support on the set where $\rho(x)$ is given by the distance to
$\maS$ and let $V_0 (x) = Z(p)\phi(x) \rho^{-2}(x)$ for $x$ in the
support of $\phi$ and close to $p \in \maS$. Outside the support of
$\phi$, we let $V_0 = 0$.  Then Hardy's inequality applied to
$\phi^{1/2} u$, which we can think of as living now on $\mathbb{R}^3$
rather than $\mathbb{T}$, gives
\begin{equation}\label{eq.Hardy.local}
  \big( \phi^{1/2} (-\Delta + z V_0)\phi^{1/2} u, u ) \ge 0 \quad
  \mbox{and } \big( \phi^{1/2} (-\Delta)\phi^{1/2} u, u ) \ge 0.
\end{equation}
We can think of this as saying that the most singular part of the
operator $\Hk$, that is, $T=\phi^{1/2} (-\Delta + z V_0)\phi^{1/2}$,
satisfies $T\geq 0$.  We will prove Lemma \ref{lemma.bounded} by
decomposing the operator $\Hk+C$ as a sum of four operators
\begin{equation*}
  \Hk = T_1+ T_2 + T_{3,C} + T_{4,C},
\end{equation*}
which we will show are all bounded from below for sufficiently large
$C$.

Recall we can write
\begin{equation*}
       \Hk = -\Delta + V_0 + V_1 + \rho^{-1}B_{\rm k. 0},
\end{equation*}
where $V_1 := V-V_0$ satisfies $\rho V_1 \in C(\mathbb{T})$ and
$\rho^{-1}B_{\rm k. 0}$ is a first order differential operator over
$\mathbb{T}$ with smooth coefficients.

Assumption 2 and Equation \eqref{eq.Hardy.local} imply that for
$\epsilon<1$, the operator
\begin{equation}\label{op.positive}
  T_1 := (1-\epsilon)T \ge 0.
\end{equation}
Fix any suitable value for $\epsilon>0$.  Then we can write $\Hk +C$
in terms of $T_1$ by decomposing in terms of the multiplication
operators $\phi^{1/2}$ and $(1-\phi^{1/2}$:
\begin{equation}
  \Hk + C = \epsilon T + T_1 -\psi^{1/2}\Delta \psi^{1/2} -
  (1-\phi^{1/2})\Delta(1-\phi^{1/2}) + V_1 +R_1,
 \end{equation}
where $R_1$ is a first order differential operator with smooth
coefficients and $\psi = 2\phi^{1/2}(1-\phi^{1/2})$.

Let $T_2 := -\psi^{1/2}\Delta \psi^{1/2} - (1-\phi^{1/2})
\Delta(1-\phi^{1/2})$.  Then $T_2 \geq 0$ by the Hardy equality
applied to $\psi^{1/2} u$ and to $(1-\phi^{1/2})u$.  Define $T_3 :=
-\epsilon \Delta + R_1 + C/2$ and $T_4 = \epsilon \phi\rho^{-2} + V_1
+ C/2$.  We claim that for $C$ large enough, $T_3 \ge 0$ and $T_4 \ge
0$, which will prove the result.

The proof that $T_4 \geq 0$ for $C>>0$ follows from a straightforward
calculation minimizing the function $\epsilon \phi\rho^{-2} + V_1$.
The proof that $T_3 \geq 0$ for $C>>0$ is basically the same as the
proof that a Schr\"odinger operator with periodic Coulumb type
potential is bounded below.  This is proved, for example, in
\cite{HunsickerNistorSofo}.
\end{proof}

Note that the above lemma implies that $\Hk$ is bounded from below as
an operator $\maK_1^1(\TmS)\rightarrow \maK_{-1}^{-1}(\TmS)$, which is
the special case of Theorem \ref{theorem1.5} when $m=a=0$.  In
addition, if we define the form $\alpha(u,v):=((\Hk+C)u,v),$ where the
right-hand side is the natural pairing between elements of
$\maK^{-1}_{-1}$ and $\maK^1_1$, then this lemma implies that
$\alpha(u,v)$ satisfies the assumptions of the Lax-Milgram lemma for
the vector space $V=\maK_1^1(\TmS)$. This and C\'{e}a's lemma imply
that for any element $u \in \maK_1^1(\TmS)$ and any finite dimensional
subspace $V \subset \maK_1^1(\TmS)$ we can construct a unique
(Galerkin) approximation $u_V \in V$ for $u$ that, up to a multiple
independent of $u$, is the best approximation for $u$ in $V$.

If we could also use the $\maK_1^1$ norm in our approximation results,
we would now have the necessary tools to prove it.  However, we need
to use the slightly smaller space $\maK_{a+1}^1$ instead.  Thus we the
stronger result, Theorem \ref{theorem1.5} to ensure the Lax-Milgram
theorem and C\'ea's lemma apply to the analogous form on these spaces.

We shall also need the following regularity result.

\begin{proposition}\label{prop.regularity}
Let $a, \lambda \in \RR$, $m \in \ZZ_{+}$. There exists a constant $C
> 0$ such if $u \in \maK_{a+1}^{1}(\TmS)$ and $(\lambda + \Hk)u \in
\maK_{a-1}^{m-1}(\TmS)$ then $u \in \maK_{a+1}^{m+1}(\TmS)$ and
\begin{equation*}
  \|u\|_{\maK_{a+1}^{m+1}} \le C \big ( \|(\lambda + \Hk)
  u\|_{\maK_{a+1}^{m+1}} + \|u\|_{\maK_{a+1}^{1}} \big ).
\end{equation*}
\end{proposition}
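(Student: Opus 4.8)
The plan is to prove this as a standard elliptic regularity (bootstrapping) statement, using the Fredholm theory of b-differential operators already established in Proposition \ref{prop.Fredholm} together with the decomposition $\Hk = -\Delta + V$, or more precisely the b-operator form $P_{0,0} = \rho^2(-\Delta + V) + \rho B_{\vt k, 0}$ from Equation \eqref{Hkinpolar}. First I would reduce to a local statement: away from $\maS$ the operator $\lambda + \Hk$ is a uniformly elliptic operator with smooth coefficients on a compact manifold, so interior elliptic regularity gives $u \in H^{m+1}_{loc}$ on $\TT \smallsetminus \maS$ with the corresponding estimate, and it remains only to analyze the behavior near each singular point $p$. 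Near $p$, multiplying by $\rho^2$ turns $\lambda + \Hk$ into the elliptic b-differential operator $-P_{\vt k, \lambda}$ of order $2$, and the weighted Sobolev spaces $\maK^m_a$ are by definition (see Lemma \ref{lemma.Q2}(i)) the b-Sobolev spaces $\rho^{a-3/2} H^m_b$. So the claim becomes: if $u \in \rho^{a+1-3/2} H^1_b$ locally and $P_{\vt k,\lambda} u \in \rho^{a+1-3/2}\,\rho^2\, H^{m-1}_b = \rho^{(a+1)-3/2}\rho^2 H^{m-1}_b$, then $u \in \rho^{(a+1)-3/2} H^{m+1}_b$ with the corresponding estimate. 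This is precisely the statement that an elliptic b-differential operator of order $2$ is, modulo a smoothing remainder, invertible on any fixed weight line $\Re(\sigma) = $ const not hitting the boundary spectrum, hence admits a parametrix improving b-Sobolev regularity by $2$ (the order of the operator); this is the content of the b-calculus results of \cite{meaps, Lesch, Schulze98} already invoked for Proposition \ref{prop.Fredholm}.

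The key steps, in order, are: (1) state the local reduction near a point $p \in \maS$ and record that $\rho^2(\lambda + \Hk) = -P_{\vt k, \lambda}$ is an elliptic b-differential operator of order $2$; (2) invoke the existence of a b-pseudodifferential parametrix $Q$ for $P_{\vt k, \lambda}$ acting between b-Sobolev spaces at the fixed weight $a+1$ — valid because $a \notin \{\tilde\beta_{l,p}, \tilde\alpha_{l,p}\}$ can be arranged, but more robustly because we only need the small-b-calculus parametrix, which exists regardless of the weight and gives $QP_{\vt k,\lambda} = \mathrm{Id} - R$ with $R$ an order $-\infty$ b-operator (hence mapping any $\rho^{a+1-3/2}H^s_b$ into $\rho^{a+1-3/2}H^{s'}_b$ for all $s, s'$, in particular preserving the finite regularity we start with); (3) apply $Q$ to the equation $P_{\vt k, \lambda} u = -\rho^2 (\lambda+\Hk)u \in \rho^{(a+1)-3/2}\rho^2 H^{m-1}_b$ and use boundedness of $Q$ (gaining $2$ orders of b-regularity) to conclude $u = Q P_{\vt k,\lambda} u + R u \in \rho^{(a+1)-3/2} H^{m+1}_b$, with the estimate $\|u\|_{b, m+1} \le C(\|P_{\vt k,\lambda}u\|_{b, m-1, \text{wt}+2} + \|u\|_{b, 1})$; (4) unwind the b-Sobolev identification and patch the local estimates near each $p \in \maS$ together with the interior estimate using a partition of unity subordinate to the cover of $\TmS$ by the neighborhoods of the singular points and the complement of a smaller neighborhood of $\maS$. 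One must be a little careful that the cutoffs used in the partition of unity, when commuted through $\Hk$, produce lower-order terms supported in the overlap, which are absorbed into the $\|u\|_{\maK^1_{a+1}}$ term on the right; this is the standard localization argument and is routine.

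The main obstacle — really the only subtle point — is making sure the argument is genuinely uniform in $\lambda$ on the stated range (all $\lambda \in \RR$, since the constant $C$ is allowed to depend on $a$ and $m$ but the statement quantifies over $\lambda$): the parametrix $Q$ depends on $\lambda$ through the lower-order term $\rho B_{\vt k, \lambda}$ in $P_{\vt k, \lambda}$, but the principal b-symbol and the indicial family $\hat P_p(\tau) = \tau^2 + \tau + \Delta_{S^2} - Z(p)$ are independent of $\lambda$ by Lemma \ref{lemma.ind.fam}, so the construction of $Q$ and the resulting constants are locally uniform in $\lambda$; combined with the fact that here $\lambda$ ranges over a set on which we need the estimate only with a $\lambda$-dependent constant (indeed the proposition as stated permits $C$ to depend on $\lambda$ — it is quantified "there exists $C>0$ such that if..." for the given $a, \lambda, m$), there is in fact no uniformity issue at all and the bootstrapping is entirely standard. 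An alternative, and perhaps cleaner, route avoiding even a whisper of parametrix construction is to note that by Proposition \ref{prop.Fredholm} the operator $\lambda + \Hk : \maK^{m+1}_{a+1} \to \maK^{m-1}_{a-1}$ is Fredholm for every $m$, so its kernel (which is independent of $m$, being contained in $\cup_m \maK^m_{a+1}$ by the polyhomogeneity results underlying Theorem \ref{theorem2.gen}) and the a priori estimate are obtained from the closed range property together with a compactness/finite-dimensionality argument exactly as in \cite{HunsickerNistorSofo}; I would cite that paper for the detailed version since the proof there transfers verbatim.
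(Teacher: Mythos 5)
Your argument is correct and is essentially the paper's proof spelled out: the paper also conjugates by $\rho$ to the elliptic b-differential operator and then invokes the regularity theory for b-pseudodifferential operators from \cite{meaps, AIN}, which is precisely the small-calculus parametrix argument you describe (and you correctly note that the small calculus, unlike the Fredholm statement, requires no condition on the weight $a$, so the proposition holds for all $a\in\RR$). The only thing you did not flag is the evident typo in the displayed estimate, where the right-hand side should read $\|(\lambda+\Hk)u\|_{\maK^{m-1}_{a-1}}$ rather than $\|(\lambda+\Hk)u\|_{\maK^{m+1}_{a+1}}$ — your argument implicitly targets the corrected version.
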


\begin{proof}
We consider again the operator $P_{0, 0} = \rho (\Hk - \lambda) \rho$,
which is a b-differential operator. Our result then follows from the
regularity for b-pseudodifferential operators \cite{meaps, AIN}.
\end{proof}

We now complete the proof of Theorem
\ref{theorem1.5} as follows.

\begin{proof} (of Theorem \ref{theorem1.5}).
As in \cite{HMN}, by regularity for b-differential operators, if we
prove our result for $m = 0$, then the regularity result of
Proposition \ref{prop.regularity} implies it for all $m \ge 0$. We shall
thus assume $m=0$ and focus on extending Lemma \ref{lemma.bounded},
where $a=0$, to the case when $|a| < \eta$.

Fix $C$ as in Lemma \ref{lemma.bounded}.  Let $D_a := C + \Hk :
\maK^{1}_{a+1}(\TmS ) \to \maK^{-1}_{a-1}(\TmS),$ that is to say, $C +
\Hk$ with fixed domain and range. As usual, we may identify the dual
of $\maK_{b}^{1}(\TmS)$ with the space $\maK_{-b}^{-1}(\TmS)$ using
the $L^2$-inner product. Then using the symmetry of $\Hk$, we find
that $D_a^* =D_{-a}$.

By Lemma \ref{lemma.bounded}, the operator $D_0$ is invertible.  By
basic results in b-calculus, $D_a$ is Fredholm for $|a| < \eta$ since
the weighted spaces in its domain and range do not correspond to an
indicial root as calculated in the previous section (see Proposition
\ref{prop.Fredholm}).  Hence, for such $a$, the family $\rho^a D_a
\rho^{-a}$ is a continuous family of Fredholm operators between the
same pair of spaces. Since index is constant over such families, we
have that $\ind(D_a) = 0$ for all $0\leq a < \eta$. We want to know
these operators are all isomorphisms. By the index calculation, it now
suffices to show they are all injective.

The inclusion $\maK^{1}_{a+1}(\TmS ) \subset \maK^{1}_{1}(\TmS)$
allows us to compute $(D_a u , u) = (\nabla u, \nabla u) + (u, u)$ for
$u \in \maK^{1}_{a+1}(\TmS)$, by Lemma \ref{lemma.Green}.  Assume $D_a
u = 0$, then $(D_a u, u) = 0$, and hence $u = 0$. This implies that
the operator $D_a$ is injective for $0 \le a < \eta$.  Since it is
Fredholm of index zero, it is also an isomorphism.  This proves our
result for $0 \le a < \eta$.  To prove the result for $-\eta < a \le
0$, we take adjoints and use $D_a = (D_{-a})^*$.

By the characterization in \cite{GM} of the Friedrichs extension of a
b-operator which is bounded below, we can see that the extension we
constructed in Theorem \ref{theorem1} is in fact the Friedrichs
extension of $\Hk$.  The proof of Theorem \ref{theorem1.5} is now
complete.
\end{proof}

The fact that the domain of the Friedrichs extension is $(C -
\Hk)^{-1}(L^2(\TmS))$ and the theorem we have just proved give us a
second way to identify the domain of the Friedrichs extension of
$\Hk$.  Following the method of \cite{HunsickerNistorSofo}, we see
that when $\Hk$ is Fredholm on $\maK_{2}^{2}(\TmS)$, the domain of the
Friedrichs extension of $\Hk$ consists of the span of
$\maK_{2}^{2}(\TmS)$ and of the singular functions that are in
$\maK_{1}^{1}(\TmS)$ but are not in $\maK_{2}^{2}(\TmS)$. This can be
used to obtain an alternative proof of Theorem \ref{theorem1} if $V$
satisfies both Assumptions 1 and 2, as follows.

\begin{proposition} Let $C_0$ be as in Theorem \ref{theorem1.5} and
$W_s$ be as in Equation \eqref{eq.def.Ws}. Assume the set $\{\tilde
  \beta_{l,p} , \tilde \alpha_{l,p}\}$ does not contain 1. Then for
  $\lambda > C_0$, the map
\begin{equation*}
  \lambda + \Hk : \maK_{2}^{m+1}(\TmS) + W_s \to \maK_{0}^{m-1}(\TmS)
\end{equation*}
is an isomorphism.
\end{proposition}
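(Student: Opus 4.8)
The plan is to deduce this statement from Theorem \ref{theorem1.5} together with the description of the Friedrichs extension already established. We know from Theorem \ref{theorem1.5} that for $\lambda > C_0$ the operator $\lambda + \Hk : \maK_{a+1}^{m+1}(\TmS) \to \maK_{a-1}^{m-1}(\TmS)$ is an isomorphism for all $|a| < \eta$. Taking $a = 1$ would be ideal, but $a$ must stay below $\eta$, so instead we work with a smaller weight $a' \in (0,\eta)$ on the domain side, where $\lambda + \Hk : \maK_{a'+1}^{m+1}(\TmS) \to \maK_{a'-1}^{m-1}(\TmS)$ is an isomorphism, and then enlarge both spaces across the weight $a=1$ using the singular functions in $W_s$. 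The hypothesis that $\{\tilde\beta_{l,p},\tilde\alpha_{l,p}\}$ does not contain $1$ guarantees that the weight $a=1$ (equivalently, for $P_{0,0}$, the corresponding shifted weight) is not an indicial root, so $\lambda + \Hk$ is Fredholm on $\maK_2^{m+1}(\TmS) \to \maK_0^{m-1}(\TmS)$ by Proposition \ref{prop.Fredholm}, with a computable index.

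First I would pin down the index. By Proposition \ref{prop.Fredholm}, the index of $\lambda + \Hk : \maK_{a+1}^{m+1}(\TmS) \to \maK_{a-1}^{m-1}(\TmS)$ jumps by the dimension of the space of singular functions with exponent between the two weights as $a$ crosses an indicial root. Passing from a weight $a' \in (0,\eta)$ (where the map is an isomorphism, hence index $0$) up to $a = 1$, the only indicial roots crossed are exactly those $\tilde\beta_{l,p}$ lying in $(\eta, 1)$ — but since $\eta = \tilde\beta_{0,p} = \sqrt{1/4 + Z(p)}$ when $Z(p) \in (-1/4,3/4)$ and $\eta \geq 1$ when $Z(p) \geq 3/4$, the singular functions picked up are precisely those spanning $W_s$ (after the $\rho^{1/2}$ shift relating $\Hk$ to $A = \rho^{1/2}\Hk\rho^{-1/2}$, these are $\chi_p \rho^{\sqrt{1/4+Z(p)}-1/2}$, exactly the generators in Equation \eqref{eq.def.Ws}). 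Thus on $\maK_2^{m+1}(\TmS)$ the operator $\lambda + \Hk$ is Fredholm of index $-\dim W_s$, and enlarging the domain by the $\dim W_s$-dimensional space $W_s$ (which maps into $\maK_0^{m-1}(\TmS)$ by Lemma \ref{lemma.Q2}(ii), since each generator lies in $\maK_{1/2+\epsilon}^{m+1}$ locally and $\rho^2 \Hk$ applied to it is smooth) raises the index back to $0$: the map $\lambda + \Hk : \maK_2^{m+1}(\TmS) + W_s \to \maK_0^{m-1}(\TmS)$ is Fredholm of index $0$.

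It then remains to prove injectivity. Suppose $u \in \maK_2^{m+1}(\TmS) + W_s$ with $(\lambda + \Hk)u = 0$. Since $\maK_2^{m+1}(\TmS) + W_s \subset \maK_{a'+1}^{1}(\TmS)$ for any $a' < \eta$ (the generators of $W_s$ have exponent $\sqrt{1/4+Z(p)} - 1/2 = \eta - 1/2 > -1/2$, and $\maK_{a'+1}^1$ requires exponent $> a' + 1 - 3/2 = a' - 1/2$, which holds for $a'$ close enough to $\eta$), we may apply Theorem \ref{theorem1.5}: the operator $\lambda + \Hk : \maK_{a'+1}^{m+1}(\TmS) \to \maK_{a'-1}^{m-1}(\TmS)$ is injective, and by Proposition \ref{prop.regularity} together with $(\lambda+\Hk)u = 0 \in \maK_{a'-1}^{m-1}(\TmS)$, the element $u$ actually lies in $\maK_{a'+1}^{m+1}(\TmS)$, whence $u = 0$. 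A Fredholm operator of index $0$ that is injective is an isomorphism, which completes the proof.

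The main obstacle I anticipate is the bookkeeping in the index computation — specifically, being careful that the weight $a = 1$ for $\Hk$ corresponds (under the conjugation by $\rho^{1/2}$ and the factor $\rho^2$ in $P_{0,0} = \rho(\Hk-\lambda)\rho$) to a point strictly between the indicial roots $\tilde\beta_{0,p} = \eta$ and $\tilde\beta_{1,p}$, so that the singular functions crossed are exactly those generating $W_s$ and no others. The hypothesis $1 \notin \{\tilde\beta_{l,p}, \tilde\alpha_{l,p}\}$ is precisely what ensures $a=1$ is not itself an indicial root, making the Fredholm statement on $\maK_2^{m+1} \to \maK_0^{m-1}$ legitimate; one must also check $\alpha_{l,p} + 1/2 < 0 < \eta$ so no $\tilde\alpha$-type singular functions intrude. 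Once this is correctly set up, the injectivity step is a routine invocation of the $m$-independent estimate and regularity already in hand.
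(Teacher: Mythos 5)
Your proof follows essentially the same route as the paper's: show that $\lambda + \Hk$ on $\maK_2^{m+1}(\TmS) + W_s$ is Fredholm of index zero using Proposition \ref{prop.Fredholm}, then derive injectivity from Theorem \ref{theorem1.5} via the inclusion of the augmented domain into a lower-weight $\maK^1$-space. The only difference is one of detail: the paper simply asserts ``Proposition \ref{prop.Fredholm} shows that $T$ is Fredholm with index zero'' and deduces injectivity from $\maK_2^{m+1}(\TmS) + W_s \subset \maK_1^1(\TmS)$, whereas you explicitly account for the index by noting it is $-\dim W_s$ on $\maK_2^{m+1}(\TmS)$ alone and is raised to zero upon adjoining the $\dim W_s$-dimensional space $W_s$ (transverse to $\maK_2^{m+1}$), and you use a weight $a'$ close to $\eta$ plus Proposition \ref{prop.regularity} rather than the simpler $a'=0$ choice; both variants are correct, and your version fills in the Fredholm-index bookkeeping that the paper leaves implicit.
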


\begin{proof}
Let $T := \lambda + \Hk$ with the indicated domain and codomain.
Proposition \ref{prop.Fredholm} shows that $T$ is Fredholm with index
zero. Since
\begin{equation*}
  \maK_{2}^{m+1}(\TmS) + W_s \subset \maK_{1}^{1}(\TmS),
\end{equation*}
Theorem \ref{theorem1.5} shows that $T$ is injective. Hence it is also
surjective, hence an isomorphism.
\end{proof}

\section{Extensions and numerical tests\label{sec.last}}

We now discuss the extension to the non-compact case and indicate
some applications to numerical methods.

\subsection{The non-compact case\label{ssec.new}}

Most of our results in the previous sections extend to the non-compact
case. Let $\overline{\RR^3}_{rad}$ be the radial compactification of
$\RR^3$. We assume that the set of singular points $\maS \subset
\RR^3$ is finite and we replace each of the points in $\maS$ with a
two-sphere (that is, we blow up the singular points). Let $\RmS$
denote the resulting compact manifold with boundary. By $\rho$ we
denote a continuous function $\rho : \RR^3 \to [0, 1]$ that is smooth
outside $\maS$, close to each $p \in \maS$ it has the form $\rho(x) =
|x - p|$, and it is constant equal to $1$ outside a compact set. (Thus
the difference with the function $\rho$ considered in the periodic
case is that now $\rho$ is constant equal to $1$ in a neighborhood of
infinity.) Then in the non-compact case, our Assumption 1 on $Z :=
\rho^2 V$ is replaced with
\begin{equation}\label{eq.def.Zprime}
  \text{\bf Assumption }1^\prime: \qquad\ Z := \rho^2 V \in
  \maC^\infty(\RmS) \cap \maC(\RR^3).
\end{equation}
Assumption 2 remains unchanged.

We consider now $H = -\Delta + V$ instead of the restrictions $\Hk$.
Assumptions $1^\prime$ and 2 allow us to extend to $H$ all the results
for $\Hk$ of the previous sections, except Corollary \ref{corollary2}
and Proposition \ref{prop.Fredholm}. The weighted Sobolev spaces
$\maK_{a}^{m}(\RmS)$ are defined in the same way (but using the new
function $\rho$).

Let $b_e$ the infimum of $V$ on the sphere at infinity. Then Corollary
\ref{corollary2} must be replaced with the following characterization
of the essential spectrum $\sigma_e(H)$ of $H$:
\begin{equation}
  \sigma_e(H) = [b_e, \infty).
\end{equation}
To prove this result, one needs also the Fredholm conditions for
operators in the scattering or SG calculus \cite{LMN1,
  MelroseScattering, SchroheSI, Parenti}. Then in Proposition
\ref{prop.Fredholm} one has to take $\lambda < b_e$. Of course, in
Theorem \ref{theorem1.5} one will have $C_0 > - b_e$.

However, in the non-compact case, for applications to numerical
methods, our results on eigenvalues and eigenfunctions must be
complemented by decay properties at infinity.  The following is proved
as in \cite{AmmannCarvalhoNistor}, Theorem 4.4. See also
\cite{AgmonDecay, PuriceMagnetic, PuriceDecay}. Let $r: \RR^3 \to \RR$
be a smooth function such $r(x) = |x|$ for $x$ outside a compact set.

\begin{theorem}\label{thm.eigen.decay}  Let $V$ be a potential
satisfying Assumptions $1^\prime$ and 2.  Also, let $0 < \epsilon <
V(x) - \lambda $ for $x$ outside a compact set and be $u$ an
eigenvector of $\Hk$ corresponding to $\lambda$. Then $e^{\epsilon r}
u \in \maK_{\nu}^{m}(\RmS)$.
\end{theorem}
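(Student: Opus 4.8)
The plan is to combine the weighted-Sobolev regularity already established for $u$ (via Theorem \ref{theorem2.gen} and Proposition \ref{prop.regularity}, transported to the non-compact setting by Assumption $1'$) with an exponential-weight Agmon-type argument near infinity, so that the only real work is controlling the commutator of $H$ with the exponential weight $e^{\epsilon r}$. First I would record that, away from a compact set $K$ containing $\maS$, the potential $V$ is smooth and bounded, so on $\RR^3 \smallsetminus K$ the operator $H$ is just $-\Delta + V$ with $V$ smooth, and the weighted spaces $\maK^m_a$ reduce to ordinary (local) Sobolev spaces — the singular-point analysis of Sections \ref{sec1}--\ref{sec2} is irrelevant here. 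Thus I would split $u = \chi_0 u + (1-\chi_0)u$ with $\chi_0$ a cutoff equal to $1$ on a slightly larger compact set; the first term is compactly supported and already lies in every $\maK^m_\nu(\RmS)$ by the interior regularity from Section \ref{sec1}, and $e^{\epsilon r}$ is bounded on its support, so it contributes nothing. It remains to treat $w := (1-\chi_0)u$, supported outside $K$, where the problem is genuinely the exponential decay of an eigenfunction of a Schrödinger operator on $\RR^3$.

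For $w$, I would follow the Agmon/Carvalho–Nistor scheme cited. Set $v_t := e^{\epsilon r_t} w$ where $r_t$ is a bounded regularization of $r$ (e.g.\ $r_t = r/(1+r/t)$) chosen so that $|\nabla r_t| \le 1$ and $r_t \uparrow r$ as $t \to \infty$; the boundedness of $r_t$ guarantees $v_t$ is a priori in the relevant weighted space, and one derives a $t$-uniform bound, then lets $t \to \infty$. Conjugating, $e^{\epsilon r_t}(H-\lambda)e^{-\epsilon r_t} = -\Delta + V - \lambda - \epsilon^2|\nabla r_t|^2 + \epsilon(\Delta r_t + 2\nabla r_t\cdot\nabla)$, so from $(H-\lambda)w = (H-\lambda)(1-\chi_0)u = -[\Delta,\chi_0]u \in \maC^\infty_c$ one gets that $(-\Delta + (V-\lambda-\epsilon^2|\nabla r_t|^2))v_t$ equals a compactly supported term plus a first-order term in $v_t$ with coefficients $O(\epsilon)$ (and supported where $\nabla r_t \ne 0$). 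Pairing with $v_t$ and integrating by parts,
\begin{equation*}
  \|\nabla v_t\|^2 + \int (V-\lambda-\epsilon^2|\nabla r_t|^2)|v_t|^2
    = (\text{compact term}, v_t) + O(\epsilon)\|v_t\|\,\|\nabla v_t\|.
\end{equation*}
On $\supp w$ we have $V - \lambda > \epsilon$ by hypothesis; shrinking $\epsilon$ if necessary so that $\epsilon - \epsilon^2 > \tfrac{\epsilon}{2}$ and absorbing the $O(\epsilon)$ gradient cross-term, this yields a $t$-uniform bound $\|\nabla v_t\|^2 + \|v_t\|^2 \le C$, where $C$ depends on the compactly supported data but not on $t$. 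Letting $t\to\infty$ (monotone convergence on the positive term, Fatou) gives $e^{\epsilon r}w \in H^1(\RR^3)$, hence $e^{\epsilon r} u \in \maK^1_\nu(\RmS)$.

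To upgrade from $H^1$ to $\maK^m_\nu$ I would invoke the elliptic regularity already available: near infinity, elliptic regularity for $-\Delta+V$ with smooth bounded $V$ bootstraps the $H^1$ bound on $e^{\epsilon r} u$ to $H^m$ for all $m$ (using that $e^{\epsilon r}$ and its derivatives are controlled and that $(H-\lambda)(e^{\epsilon r}u)$ has the same structure — an $O(\epsilon)$ perturbation of an elliptic operator applied to something we already know is in $H^1$, plus compactly supported terms); near the singular points $e^{\epsilon r}$ is just a smooth bounded factor, so Proposition \ref{prop.regularity} applies verbatim to give $e^{\epsilon r}u \in \maK^{m}_{\nu}(\RmS)$ for all $m$ and all $\nu < \nu_0$, as claimed. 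The main obstacle, and the step where the hypothesis $0 < \epsilon < V(x)-\lambda$ is really used, is the absorption argument: one must control the first-order commutator term $\epsilon(2\nabla r_t\cdot\nabla + \Delta r_t)v_t$ against the positive gap $(V-\lambda-\epsilon^2|\nabla r_t|^2)$ uniformly in $t$; this is exactly the Agmon-estimate juggling carried out in \cite{AmmannCarvalhoNistor}, Theorem 4.4, and it is the reason $\epsilon$ must be taken strictly below the gap rather than equal to it.
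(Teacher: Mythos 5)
Your proposal is correct and follows essentially the same route as the paper: the paper's ``proof'' is simply a citation to \cite{AmmannCarvalhoNistor}, Theorem 4.4 (and to \cite{AgmonDecay}), which is precisely the Agmon-type exponential-weight argument you reproduce — cutoff away from $\maS$, conjugation by the regularized weight $e^{\epsilon r_t}$, energy estimate using the spectral gap $V-\lambda>\epsilon$, passage to the limit $t\to\infty$, and bootstrap to $\maK^m_\nu$ via Proposition \ref{prop.regularity} near $\maS$ and ordinary elliptic regularity at infinity. The only wrinkle worth flagging is that the positivity step really needs $V-\lambda-\epsilon^2|\nabla r_t|^2>0$, i.e.\ $\epsilon^2<V-\lambda$ rather than the stated $\epsilon<V-\lambda$; your fix of ``shrinking $\epsilon$'' would prove the conclusion for a smaller decay rate, but this is a defect in the theorem's hypothesis as written rather than in your argument (and the cross term you estimate by $O(\epsilon)\|v_t\|\,\|\nabla v_t\|$ is in fact purely imaginary after pairing with $v_t$, so no absorption of it is needed).
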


Under Assumptions $1^\prime$ and 2, a perturbation argument further
yields following result on the decay properties of the eigenfunctions
and the solutions of the equation $(C + \Hk)u = f$.

\begin{theorem} Let us assume that $C + \Hk : \maK_{1}^{1}(\RmS) \to
\maK_{-1}^{-1}(\RmS)$ is invertible (which is the case if $C > C_0$,
with $C_0$ as in Theorem \ref{theorem1.5}), then for $|a|$ and
$|\epsilon|$ small
\begin{equation*}
  C + \Hk : e^{\epsilon r} \maK_{a+1}^{m+1}(\RmS) \to e^{\epsilon r}
  \maK_{a-1}^{m-1}(\RmS)
\end{equation*}
is again invertible.
\end{theorem}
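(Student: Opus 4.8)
The plan is to reduce the claimed invertibility on the weighted-exponential spaces to the already-established invertibility of $C + \Hk$ on $\maK_1^1(\RmS)$ (equivalently on $\maK_{a+1}^{m+1}(\RmS)$ for $|a| < \eta$, by Theorem \ref{theorem1.5} adapted to the non-compact case) via a conjugation argument. Write $E := e^{\epsilon r}$ and consider the conjugated operator $E^{-1}(C + \Hk)E$ acting on $\maK_{a+1}^{m+1}(\RmS)$. Since the map $u \mapsto Eu$ is by definition an isometry (up to equivalence of norms) from $\maK_{a+1}^{m+1}(\RmS)$ onto $e^{\epsilon r}\maK_{a+1}^{m+1}(\RmS)$, the stated claim is equivalent to showing that $E^{-1}(C+\Hk)E : \maK_{a+1}^{m+1}(\RmS) \to \maK_{a-1}^{m-1}(\RmS)$ is invertible for $|a|, |\epsilon|$ small.

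The key computation is that conjugating $-\Delta + V$ by $e^{\epsilon r}$ produces a controlled first-order perturbation: near infinity, where $r(x) = |x|$ is smooth and $|\nabla r| = 1$, one has
\begin{equation*}
  e^{-\epsilon r}(-\Delta) e^{\epsilon r} = -\Delta - 2\epsilon \nabla r \cdot \nabla - \epsilon^2 |\nabla r|^2 - \epsilon (\Delta r),
\end{equation*}
so that $E^{-1}(C+\Hk)E = (C + \Hk) + \epsilon R_\epsilon$, where $R_\epsilon$ is a first-order differential operator with coefficients that are bounded (with all derivatives) on $\RR^3$ — indeed $\nabla r$, $\Delta r$, and the analogous modifications to the $B_{\vt k, 0}$ term are all smooth and bounded near infinity because $\rho \equiv 1$ there, and near the singular set $\maS$ the cutoff structure means $r$ is smooth so no new $\rho^{-2}$ singularity is introduced. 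Crucially, $\epsilon R_\epsilon : \maK_{a+1}^{m+1}(\RmS) \to \maK_{a-1}^{m-1}(\RmS)$ is bounded with operator norm $O(\epsilon)$. Similarly, replacing $a = 0$ by small $|a| < \eta$ is handled exactly as in the proof of Theorem \ref{theorem1.5}: the family $\rho^a (C+\Hk) \rho^{-a}$ is a norm-continuous family of operators, equal to $C + \Hk$ plus a first-order perturbation of norm $O(|a|)$, and the weights $a \pm 1$ avoid the indicial roots for $|a| < \eta$.

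With these estimates in hand, the conclusion follows from the standard fact that the set of invertible bounded operators between two Banach spaces is open: since $C + \Hk : \maK_{a+1}^{m+1}(\RmS) \to \maK_{a-1}^{m-1}(\RmS)$ is invertible for $|a| < \eta$ (Theorem \ref{theorem1.5}, in its non-compact version, valid for $C > C_0$), and since the perturbation $\epsilon R_\epsilon$ combined with the $\rho^a$-conjugation has operator norm bounded by a constant times $|\epsilon| + |a|$, for $|\epsilon|$ and $|a|$ sufficiently small the perturbed operator $E^{-1}\rho^a(C+\Hk)\rho^{-a}E$ remains invertible, and hence so does $C + \Hk : e^{\epsilon r}\maK_{a+1}^{m+1}(\RmS) \to e^{\epsilon r}\maK_{a-1}^{m-1}(\RmS)$. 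The main obstacle is verifying carefully that the conjugated operator $E^{-1}(C+\Hk)E$ genuinely differs from $C + \Hk$ only by a term that is \emph{bounded} as a map $\maK_{a+1}^{m+1} \to \maK_{a-1}^{m-1}$ with small norm — that is, checking that no problematic growth or singularity is introduced, which amounts to confirming $\nabla r$ and its derivatives are bounded on all of $\RR^3$ and that the conjugation commutes appropriately with the weight structure near $\maS$; this is exactly the content of the cited Theorem 4.4 of \cite{AmmannCarvalhoNistor}, to which one defers for the full details.
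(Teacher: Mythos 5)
Your proof is correct and follows essentially the same approach as the paper's (very brief) proof: conjugate by $e^{\epsilon r}$, observe that the resulting operator differs from $C+\Hk$ by a first-order differential operator with bounded coefficients and norm $O(|\epsilon|)$, combine with the $\rho^a$-conjugation from Theorem \ref{theorem1.5}, and invoke openness of the set of invertible operators. The only cosmetic difference is that the paper phrases the bootstrap in $m$ through Proposition \ref{prop.regularity} whereas you apply the perturbation argument at each fixed $m$ directly; both are valid.
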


\begin{proof} The proof uses the same arguments used in the proof
of Theorem \ref{theorem1.5}, the continuity of the family
$e^{\epsilon r} \Hk e^{- \epsilon r}$ in $\epsilon$, and the regularity
result \ref{prop.regularity}.
\end{proof}

\subsection{Applications and numerical tests\label{ssec3}}

Let $u$ be an eigenvector of $\Hk$ or the solutions of equations of
the form $(\lambda + \Hk)u = f$, with $f$ smooth enough. Our results
give smoothness properties for $u$. They also give decay properties of
$u$ in the non-periodic case. These properties, in turn, can be used
to obtain approximation properties of $u$. Standard numerical methods
results (C\'{e}a's lemma or the results reviewed in \cite{BabuOsborn})
then lead to error estimates in the Finite Element Method for the
numerical solutions of the equation $(C + \Hk)u = f$ or for the
eigenfunctions of $\Hk$. We have tested these approximation results in
the periodic case using, first, a graded mesh and, second, augmented
plane waves. In both cases, the tests are in good agreement with our
theoretical results. These numerical and the needed approximation
results will be discussed in full detail in the second and fourth
parts of our paper \cite{HLNU2, HLNU4}.


\def\cprime{$'$} \def\ocirc#1{\ifmmode\setbox0=\hbox{$#1$}\dimen0=\ht0
  \advance\dimen0 by1pt\rlap{\hbox to\wd0{\hss\raise\dimen0
  \hbox{\hskip.2em$\scriptscriptstyle\circ$}\hss}}#1\else {\accent"17 #1}\fi}

\end{document}